\newtheorem{theorem}{Theorem}
\newtheorem{corollary}[theorem]{Corollaire}
\newtheorem{definition}[theorem]{Definition}
\newtheorem{example}[theorem]{Exemple}
\newtheorem{lemma}[theorem]{Lemma}
\newtheorem{remark}[theorem]{Remarque}
\begin{document}
\title{On the Enumeration of Vuza Canons}
\author{Franck Jedrzejewski}
\maketitle

\begin{abstract}
A Vuza canon of $\mathbb{Z}_{N}$ is a non periodic factorization of the
cyclic group $\mathbb{Z}_{N}$ in two factors. The aim of this paper is to
present some new results for generating Vuza canons, and to give new
minoration of their number for some values of N.

\medskip

\noindent \textbf{Mathematics Subject Classification:} 00A65, 97M80.


\end{abstract}



\bigskip

\section{Rhythmic Canons}

Let $G$ be a finite cyclic group, and $S,R$\ two non-empty subsets of $G$.\
If each element $g\in G$ can be expressed uniquely in the form $g=s+r$ with $%
s\in S$ and $r\in R$ then the equation $G=S\oplus R$ is called a \emph{%
factorization} of $G$. A \emph{rhythmic canon} of $G$ is a factorization of $%
G$ in two subsets.\ The set $S$\ is called the \emph{inner voice}, and the
set $R$ is the \emph{outer voice}.\ If $G$ is the cyclic group $\mathbb{Z}%
_{N}=\{0,1,2,...,N-1\}$, it is worthy of noting that the set $S$ can be seen
as a tile of the line $\mathbb{Z}_{N}$ and we said that $S$ tiles $\mathbb{Z}%
_{N}$ with\ $R$ or that $R$ tiles with $S$, since if $(S,R)$ is a rhythmic
canon, also is $(R,S)$.

A non-empty set $S$ is \emph{periodic} if there is an element $g$ of $G$
such that $S+g=S$ and $g\neq 0$ (the identity element of $G$). Around 1948,
Haj\'{o}s thought that in a factorization of $\mathbb{Z}_{N}$ in two
factors, one of the factors had to be periodic. In fact, this conjecture is
false, and counterexamples are precisely Vuza canons. They were introduced
by D.T. Vuza under the name of \textquotedblleft Regular Complementary
Canons of Maximal Category\textquotedblright\ in his seminal paper %
\citep{Vuz1991} and are now known as \textquotedblleft Vuza
Canons\textquotedblright .

\begin{definition}
Let $G$ be a finite cyclic group, S,R two non-empty subsets of $G$. A Vuza
Canon $(S,R)$ of $G$ is a rhythmic canon 
\begin{equation*}
G=S\oplus R
\end{equation*}%
where neither S, nor R is periodic.
\end{definition}

A factorization of a cyclic group $G$ is said to be trivial if at least one
of its factors is periodic.\ If every factorization of $G$ is trivial, $G$
is called a \textquotedblleft good group\textquotedblright .\ Otherwise, if
non-trivial factorizations exist, $G$ is called a \textquotedblleft bad
group\textquotedblright .\ In 1950, G.\ Haj\'{o}s \citep{Haj1950b} proved
that there exist cyclic groups admitting a non-trivial factorization.\ N.G.\
de Bruijn \citep{Bru1953} showed that if $N=d_{1}d_{2}d_{3}$ where $d_{3}>1$%
, $d_{1}$ and $d_{2}$ are coprime and if both $d_{1}$ and $d_{2}$ are
composite numbers then the cyclic group $\mathbb{Z}_{N}$ of order $N$ is
bad. In \citep{Bru1953b}, N.G.\ de Bruijn remarked that the cyclic groups
which are not covered by this result are those of orders $p^{\alpha }$ $%
(\alpha \geq 1),$ $p^{\alpha }q$ $(\alpha \geq 1),$ $p^{2}q^{2},$ $p^{\alpha
}qr$ $(\alpha =1,2),$ $pqrs$ where $p,q,r,s$ denote different primes. Red%
\'{e}i \citep{Red1950} proved that the cyclic groups of orders $p^{\alpha }$ 
$(\alpha \geq 1),$ $pq,$ $pqr$ are \textquotedblleft good\textquotedblright
. N.G.\ de Bruijn \citep{Bru1953b} showed that $p^{\alpha }q$ $(\alpha \geq
1)$\ are \textquotedblleft good groups\textquotedblright . The remaining
cases $p^{2}q^{2},$ $p^{2}qr$ and $pqrs$ were showed by\ A.D.\ Sands %
\citep{San1957}\ in 1957. Later on, the classification was extended to all
finite abelian groups \citep{San1962, San2004}. Vuza gave the first musical
application of the factorization of cyclic groups and computed the first
\textquotedblleft Vuza canon\textquotedblright\ that appears for $N=72$, and
the next one occurs at $N=108$, 120, 144, 168, etc. He also showed the
following equivalence of the conditions on order $N$ \citep{Vuz1991}.

\begin{theorem}
The following statements are equivalent:

\noindent $(i)$ Vuza canons only exist for orders N which are NOT of the form%
\begin{equation*}
p^{\alpha }~(\alpha \geq 1),~p^{\alpha }q~(\alpha \geq
1),~p^{2}q^{2},~p^{\alpha }qr~(\alpha =1,2),~pqrs
\end{equation*}%
where p, q, r, s are different primes.

\noindent $(ii)$ Vuza canons only exist for orders N of the form%
\begin{equation*}
N=n_{1}n_{2}n_{3}p_{1}p_{2}
\end{equation*}%
where $p_{1},p_{2}$\ denote different primes, $p_{i}n_{i}\geq 2,$ for $i=1,2$
and $n_{1}p_{1}$ and $n_{2}p_{2}$ are coprime. Such $N$ are called \emph{%
Vuza orders}.
\end{theorem}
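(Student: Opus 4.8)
The plan is to observe that both $(i)$ and $(ii)$ assert that Vuza canons exist only for the orders in a prescribed set, so the equivalence reduces to the purely arithmetic statement that the two prescribed sets coincide. Write $\mathcal{G}$ for the ``good'' list $p^{\alpha}, p^{\alpha}q, p^{2}q^{2}, p^{\alpha}qr\ (\alpha=1,2), pqrs$ appearing in $(i)$, and $\mathcal{V}$ for the set of Vuza orders of $(ii)$. I would first reformulate $\mathcal{V}$ in the shape used by de Bruijn: setting $d_{1}=n_{1}p_{1}$, $d_{2}=n_{2}p_{2}$ and $d_{3}=n_{3}$, the conditions of $(ii)$ say exactly that $N=d_{1}d_{2}d_{3}$ with $d_{1},d_{2}$ coprime and composite and $d_{3}>1$. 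This is an equivalence because every composite integer can be written as (prime)$\times$(integer $\geq 2$), so ``$d_{i}=n_{i}p_{i}$ with $p_{i}$ prime and $n_{i}\geq 2$'' and ``$d_{i}$ composite'' describe the same divisors. Thus the goal becomes $\mathcal{G}^{c}=\mathcal{V}$, where $\mathcal{V}$ now reads ``$N$ admits a factorization into two coprime composite divisors leaving a quotient $>1$''.

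For the inclusion $\mathcal{V}\subseteq\mathcal{G}^{c}$ I would invoke the results quoted in the introduction: de Bruijn's theorem says that a factorization $N=d_{1}d_{2}d_{3}$ with $d_{3}>1$ and $d_{1},d_{2}$ coprime composite forces $\mathbb{Z}_{N}$ to be bad, while the classification of Rédei, de Bruijn and Sands says that the good groups are precisely those with $N\in\mathcal{G}$. Hence a Vuza order cannot lie in $\mathcal{G}$.

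The arithmetic core is the reverse inclusion $\mathcal{G}^{c}\subseteq\mathcal{V}$, which I would prove by case analysis on the number $\omega(N)$ of distinct prime divisors of $N=\prod_{i} p_{i}^{a_{i}}$, in each case exhibiting coprime composite divisors $d_{1},d_{2}$ with $d_{1}d_{2}\mid N$ and $d_{1}d_{2}<N$. For $\omega=1$ there is nothing to do, since $p^{\alpha}\in\mathcal{G}$. For $\omega=2$, $N\notin\mathcal{G}$ forces both exponents $\geq 2$ with $(a_{1},a_{2})\neq(2,2)$, and $d_{1}=p_{1}^{2},\,d_{2}=p_{2}^{2}$ work. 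For $\omega=3$ the non-good exponent multisets are those with two exponents $\geq 2$ (take $p_{1}^{2},p_{2}^{2}$) or of shape $(1,1,\geq 3)$ (take $p_{3}^{2}$ and $p_{1}p_{2}$). For $\omega=4$, $N\notin\mathcal{G}$ means some exponent exceeds $1$, so $d_{1}=p_{1}p_{2},\,d_{2}=p_{3}p_{4}$ give $d_{1}d_{2}=p_{1}p_{2}p_{3}p_{4}<N$. For $\omega\geq 5$ every $N$ is non-good, and $d_{1}=p_{1}p_{2},\,d_{2}=p_{3}p_{4}$ again suffice. In each line one checks both the divisibility $d_{1}d_{2}\mid N$ and the strict inequality $d_{1}d_{2}<N$ (equivalently $d_{3}>1$).

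The step I expect to be delicate is precisely this case analysis at the boundary: the good orders $p^{2}q^{2}$, $p^{2}qr$ and $pqrs$ are exactly the configurations in which the only way to split off two coprime composite divisors uses up all of $N$, forcing $d_{3}=1$; the argument must therefore verify in each surviving case that there is strictly more room than in these extremal patterns, so that $d_{3}>1$ genuinely holds. Assembling the two inclusions yields $\mathcal{G}^{c}=\mathcal{V}$, and hence the equivalence of $(i)$ and $(ii)$.
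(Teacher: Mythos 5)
The paper does not actually prove this theorem: it is stated as a known result and attributed to Vuza \citep{Vuz1991}, so there is no in-paper argument to compare yours against. Judged on its own, your proof is essentially correct, and your central move --- reducing the equivalence to the arithmetic identity $\mathcal{G}^{c}=\mathcal{V}$ via the de Bruijn normal form $d_{1}=n_{1}p_{1}$, $d_{2}=n_{2}p_{2}$, $d_{3}=n_{3}$ --- is the right one. Your case analysis on $\omega(N)$ for $\mathcal{G}^{c}\subseteq\mathcal{V}$ is exhaustive and each choice of $(d_{1},d_{2})$ does leave $d_{3}>1$. Two remarks. First, you silently (and correctly) repair the statement: as printed, the condition ``$p_{i}n_{i}\geq 2$'' is vacuous since $p_{i}\geq 2$, and there is no stated constraint on $n_{3}$; the theorem is only true under the reading $n_{i}\geq 2$ for $i=1,2$ and $n_{3}\geq 2$ (otherwise $36=2\cdot 2\cdot 3\cdot 3\cdot 1$ would be a ``Vuza order'' while $36=p^{2}q^{2}$ lies in the good list). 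It would strengthen the write-up to say this explicitly rather than let it ride on the translation to ``composite.'' Second, your inclusion $\mathcal{V}\subseteq\mathcal{G}^{c}$ detours through the full group-theoretic classification (de Bruijn plus R\'edei--Sands); this is logically sound, not circular, but it is a sledgehammer for what is a purely arithmetic fact: if $d_{1},d_{2}$ are coprime and composite with $d_{1}d_{2}d_{3}=N$ and $d_{3}>1$, then $N$ has at least two distinct primes, $\Omega(N)\geq 5$, and a short check rules out each shape $p^{\alpha}$, $p^{\alpha}q$, $p^{2}q^{2}$, $p^{\alpha}qr$ $(\alpha\leq 2)$, $pqrs$ directly. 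Proving both inclusions arithmetically keeps the theorem where it belongs, as a statement about integers rather than about factorizations of groups.
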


Let $\mathbb{Z}_{b}$ be the cyclic group $\mathbb{Z}_{b}=\{0,1,2,...,b-1\}.$
The subset $\mathbb{I}_{a}$\ of $\mathbb{Z}_{b}$ is defined by%
\begin{equation*}
\mathbb{I}_{a}=\{0,1,2,...,a-1\}
\end{equation*}%
and if $k$ is a positive integer, $k\mathbb{I}_{a}$ denotes the subset of $%
\mathbb{Z}_{b}$ 
\begin{equation*}
k\mathbb{I}_{a}=\{0,k~\mathop{\rm mod}\nolimits~b,2k~\mathop{\rm mod}%
\nolimits~b,...,(a-1)k~\mathop{\rm mod}\nolimits~b\}
\end{equation*}%
As shown by E.\ Amiot, rhythmic canons are stable by duality \citep{Ami2011}%
, which is the exchange between inner and outer voices, and by affine
transformation \citep{Ami2005} $x\rightarrow ax+b$, where $a$ is coprime
with $N$. Vuza canon are also stable by duality.\ If $(S,R)$ is a Vuza
canon, then $(R,S)$ is also a Vuza canon as the addition is commutative. The
stability by affine transformation is not obvious. However, the stability by
concatenation of rhythmic canons states as follows.

\begin{theorem}
Let k be a positive integer.\ If $(S,R)$ is a rhythmic canon of $\mathbb{Z}%
_{N}$, then $(S\oplus N\mathbb{I}_{k},R)$ is a rhythmic canon of $\mathbb{Z}%
_{kN}$.
\end{theorem}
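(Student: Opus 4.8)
The plan is to exhibit $\mathbb{Z}_{kN}$ as a direct sum of the \emph{three} subsets $S$, $N\mathbb{I}_k$ and $R$, and then to regroup the first two factors. Throughout I identify $S$ and $R$ with their representatives in $\{0,1,\dots,N-1\}\subseteq\{0,1,\dots,kN-1\}=\mathbb{Z}_{kN}$, and I note that in $\mathbb{Z}_{kN}$ one has $N\mathbb{I}_k=\{0,N,2N,\dots,(k-1)N\}$, since each multiple $jN$ with $0\le j\le k-1$ already lies below $kN$.

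First I would record the cardinality bookkeeping: the hypothesis $\mathbb{Z}_N=S\oplus R$ gives $|S|\cdot|R|=N$, whence $|S|\cdot|N\mathbb{I}_k|\cdot|R|=N\cdot k=kN=|\mathbb{Z}_{kN}|$. Consequently it suffices to prove that the evaluation map $\phi\colon S\times N\mathbb{I}_k\times R\to\mathbb{Z}_{kN}$, $(s,jN,r)\mapsto (s+jN+r)\bmod kN$, is surjective; injectivity, and hence bijectivity, then follows automatically from the equality of cardinalities.

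For surjectivity, take any $x\in\{0,1,\dots,kN-1\}$ and set $i=x\bmod N$. Because $\mathbb{Z}_N=S\oplus R$, there is a unique pair $(s,r)\in S\times R$ with $s+r\equiv i\equiv x\pmod N$, and therefore $x-(s+r)=mN$ for some integer $m$. Choosing the unique $j\in\{0,\dots,k-1\}$ with $j\equiv m\pmod k$ yields $s+jN+r\equiv s+r+mN=x\pmod{kN}$, so $x$ lies in the image of $\phi$. This step is where the one genuine subtlety sits: the given factorization only controls residues modulo $N$, so one must check that the quotient information $\lfloor x/N\rfloor \bmod k$ is recovered \emph{exactly} by the $N\mathbb{I}_k$ factor — the equivalence $jN\equiv mN\pmod{kN}\iff j\equiv m\pmod k$ is precisely what makes the lift from $\mathbb{Z}_N$ to $\mathbb{Z}_{kN}$ unambiguous.

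Having established that $\phi$ is a bijection, I would conclude by regrouping. Bijectivity of $\phi$ says $\mathbb{Z}_{kN}=S\oplus N\mathbb{I}_k\oplus R$ as an honest direct sum of three subsets; restricting to the $S$ and $N\mathbb{I}_k$ coordinates shows that the partial sums $s+jN$ are pairwise distinct, so $S\oplus N\mathbb{I}_k$ is itself a well-defined factor, and associativity together with commutativity of the sum then gives $\mathbb{Z}_{kN}=(S\oplus N\mathbb{I}_k)\oplus R$. This is exactly the assertion that $(S\oplus N\mathbb{I}_k,R)$ is a rhythmic canon of $\mathbb{Z}_{kN}$. The case $k=1$ is trivial, and the argument above is uniform in $k$.
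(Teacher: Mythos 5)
Your proof is correct and follows essentially the same route as the paper's one-line argument, which simply chains $S\oplus N\mathbb{I}_{k}\oplus R=\mathbb{I}_{N}\oplus N\mathbb{I}_{k}=\mathbb{Z}_{kN}$ and regroups the factors. The only difference is that you spell out the detail the paper leaves implicit — that the factorization of $\mathbb{Z}_{N}$ lifts unambiguously modulo $kN$ because $jN\equiv mN\pmod{kN}$ iff $j\equiv m\pmod{k}$ — together with the cardinality count that makes the sum direct.
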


\begin{proof}
Since $(S,R)$ is a rhythmic canon of $\mathbb{Z}_{N}$, we have $S\oplus N%
\mathbb{I}_{k}\oplus R=\mathbb{I}_{N}\oplus N\mathbb{I}_{k}=\mathbb{Z}_{kN}.$
\end{proof}

Remark that $(S\oplus N\mathbb{I}_{k},R)$\ is not a Vuza canon, since $%
S\oplus N\mathbb{I}_{k}$\ is $N$ periodic. However in some cases, the
converse is true for some Vuza canons.\ The following lemma is shown in %
\citep{Sza2009}.

\bigskip

\begin{lemma}
Let G be a finite cyclic group, H a subgroup of G and $S,R$ two subsets of G
such that $S\subseteq H$ then%
\begin{equation*}
(S+R)\cap H=S+(R\cap H)
\end{equation*}
\end{lemma}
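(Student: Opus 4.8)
The plan is to prove the set identity by establishing the two inclusions separately, relying only on the subgroup axioms for $H$ (closure under addition and under taking inverses/differences) together with the hypothesis $S\subseteq H$.

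First I would dispatch the inclusion $S+(R\cap H)\subseteq (S+R)\cap H$, which is the routine direction. Take an arbitrary $x\in S+(R\cap H)$ and write $x=s+r$ with $s\in S$ and $r\in R\cap H$. On the one hand $r\in R$, so $x=s+r\in S+R$. On the other hand $s\in S\subseteq H$ and $r\in H$, and since $H$ is a subgroup it is closed under addition, so $x=s+r\in H$. Hence $x\in (S+R)\cap H$, which gives the first inclusion.

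The reverse inclusion $(S+R)\cap H\subseteq S+(R\cap H)$ is the step carrying the real content, and it is where the hypothesis $S\subseteq H$ is essential. I would take $x\in (S+R)\cap H$, so $x\in H$ and $x=s+r$ for some $s\in S$, $r\in R$. The point is to argue that in fact $r$ already lies in $H$: since $x\in H$ and $s\in S\subseteq H$, and $H$ is a subgroup (hence closed under subtraction), we get $r=x-s\in H$. Therefore $r\in R\cap H$, and consequently $x=s+r\in S+(R\cap H)$.

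Combining the two inclusions yields the claimed equality $(S+R)\cap H=S+(R\cap H)$. I do not expect any genuine obstacle here: the only place where care is needed is making explicit that $r=x-s\in H$ in the second inclusion, which is exactly the point at which both the subgroup structure of $H$ and the containment $S\subseteq H$ are used; without $S\subseteq H$ one could not conclude $r\in H$ and the identity would fail in general.
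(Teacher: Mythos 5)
Your proof is correct and follows essentially the same route as the paper's: both directions are handled by the same elementary element-chase, with the reverse inclusion hinging on $r=x-s\in H$ via $S\subseteq H$ and the subgroup property. Your write-up is in fact slightly more explicit than the paper's in the easy direction, where the paper simply asserts $g\in H$ without spelling out closure under addition.
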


\begin{proof}
1) Let $g$ be an element of $S+(R\cap H)$ that is $g=s+r$ with $s\in S$ and $%
r\in R\cap H.$ Since $g$ is also element of $S+R$\ and element of $H,$ we
have the inclusion%
\begin{equation*}
S+(R\cap H)\subseteq (S+R)\cap H
\end{equation*}%
2) Conversely, given $g\in (S+R)\cap H,$ $g$ belongs to $S+R$ and $g\in H.$
Since $g\in S+R,$ there exist $s\in S$ and $r\in R$ such that $g=s+r$.\ But $%
g$ is in $H$ and $s\in S\subseteq H.\ $Thus $r\in H$ as $H$ is a subgroup of 
$G$. Thus, the element $r$ is in $R\cap H$ and so%
\begin{equation*}
(S+R)\cap H\subseteq S+(R\cap H)
\end{equation*}
\end{proof}

\begin{theorem}
Let G be a finite cyclic group and H a subgroup of G.\ If $(S,R)$ is a Vuza
canon of $G$ and if $S$ is a subset of $H$, $(R\cap H)$ is non periodic in $%
H $, then $(S,R\cap H)$ is a Vuza canon of $H$.
\end{theorem}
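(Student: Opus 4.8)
The plan is to verify the two requirements that make $(S, R\cap H)$ a Vuza canon of $H$: first, that $H = S \oplus (R\cap H)$ is a genuine factorization, and second, that neither $S$ nor $R\cap H$ is periodic in $H$. The non-periodicity of $R\cap H$ is granted by hypothesis, so the real content lies in producing the factorization, and I expect this to follow almost entirely from the preceding Lemma together with the directness already present in $G = S \oplus R$.

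First I would obtain the set equality $H = S + (R\cap H)$. Since $(S,R)$ is a rhythmic canon of $G$ we have $S + R = G$, so intersecting with the subgroup $H$ gives $(S+R)\cap H = G \cap H = H$. Because $S \subseteq H$ by hypothesis, the Lemma applies directly and yields
\begin{equation*}
S + (R\cap H) = (S+R)\cap H = H,
\end{equation*}
so the two candidate factors together cover all of $H$.

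Next I would promote this set equality to a direct sum. Suppose some $h \in H$ admits two representations $h = s_1 + r_1 = s_2 + r_2$ with $s_i \in S$ and $r_i \in R\cap H \subseteq R$. These are two representations of $h$ as an element of $S + R$; since $G = S\oplus R$ is by definition a factorization, the representation of $h$ is unique in $G$, forcing $s_1 = s_2$ and $r_1 = r_2$. Hence the representation of each $h\in H$ via $S$ and $R\cap H$ is unique, and so $H = S\oplus(R\cap H)$.

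It remains to confirm that neither factor is periodic in $H$. For $R\cap H$ this is exactly the hypothesis. For $S$, I would argue that non-periodicity transfers downward: if there were some $h\in H$ with $h\neq 0$ and $S + h = S$, then $h$ would be a nonzero element of $G$ satisfying $S + h = S$, contradicting the non-periodicity of $S$ in the Vuza canon $(S,R)$ of $G$. The only point requiring genuine care is precisely this transfer — one must search for a period inside $H$ yet recognize that its existence would already violate non-periodicity in the larger group — but since $H \subseteq G$ the argument is immediate. Combining the factorization $H = S \oplus (R\cap H)$ with the non-periodicity of both factors then shows that $(S, R\cap H)$ is a Vuza canon of $H$, as required.
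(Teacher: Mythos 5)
Your proposal is correct and follows the paper's own argument exactly: apply the preceding Lemma to get $H = (S+R)\cap H = S+(R\cap H)$, inherit directness from $G = S\oplus R$, and transfer non-periodicity of $S$ from $H$ to $G$ by noting any period in $H$ would be a period in $G$. You merely spell out the uniqueness-of-representation and periodicity-transfer steps that the paper states in one line each.
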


\begin{proof}
The equations%
\begin{eqnarray*}
H &=&G\cap H \\
&=&(S+R)\cap H \\
&=&S+(R\cap H)
\end{eqnarray*}%
show that $H$ is the sum of $S$ and $R\cap H$, as $S$ is a subset of $H$.
This sum is direct since the sum $S+R$\ is. Moreover, suppose that $S$ is
periodic in $H$. Since $S\subset H\subset G,$ the set $S$ is periodic in $G$%
.\ And this leads to a contradiction
\end{proof}

\begin{corollary}
Let k be a positive integer and N a Vuza order.\ If $(S,R)$ is a Vuza canon
of $\mathbb{Z}_{kN}$, $S$ and $(R\cap k\mathbb{I}_{N})$\ two non-periodic
subsets of $k\mathbb{I}_{N},$ then the pair $(S^{\prime },R^{\prime })$
defined by 
\begin{equation*}
S^{\prime }=S/k,\text{ }R^{\prime }=(R\cap k\mathbb{I}_{N})/k
\end{equation*}%
is a Vuza canon of $\mathbb{Z}_{N}.$
\end{corollary}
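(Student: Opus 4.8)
The plan is to recognize $k\mathbb{I}_{N}$ as a subgroup of $\mathbb{Z}_{kN}$ and then invoke the preceding theorem, transporting its conclusion along an isomorphism. Concretely, set $H=k\mathbb{I}_{N}=\{0,k,2k,\dots,(N-1)k\}$. Since every element is an integer multiple of $k$ reduced modulo $kN$, and since $ik+jk\equiv((i+j)\bmod N)\,k\pmod{kN}$, the set $H$ is closed under addition and contains inverses; hence $H$ is the (unique) subgroup of order $N$ in $\mathbb{Z}_{kN}$. Moreover the map $\phi:H\to\mathbb{Z}_{N}$ defined by $\phi(ik)=i\bmod N$, that is, division by $k$, is a group isomorphism.

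Next I would verify the hypotheses of the preceding theorem for $G=\mathbb{Z}_{kN}$ and this $H$. By assumption $(S,R)$ is a Vuza canon of $G$, the factor $S$ is contained in $k\mathbb{I}_{N}=H$, and $R\cap H=R\cap k\mathbb{I}_{N}$ is non-periodic in $H$. The theorem then yields at once that $(S,R\cap H)$ is a Vuza canon of $H$; that is, $H=S\oplus(R\cap H)$ with both factors non-periodic in $H$.

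Finally I would transport this factorization along $\phi$. Applying $\phi$ to $H=S\oplus(R\cap H)$ gives $\mathbb{Z}_{N}=\phi(S)\oplus\phi(R\cap H)=S'\oplus R'$, since $\phi(S)=S/k=S'$ and $\phi(R\cap H)=(R\cap k\mathbb{I}_{N})/k=R'$. Because a group isomorphism carries direct-sum factorizations to direct-sum factorizations and preserves non-periodicity—a translation $X+h=X$ in $H$ corresponds to $\phi(X)+\phi(h)=\phi(X)$ in $\mathbb{Z}_{N}$, with $h\neq 0$ if and only if $\phi(h)\neq 0$—the pair $(S',R')$ is a Vuza canon of $\mathbb{Z}_{N}$.

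The only genuine obstacle is the bookkeeping at the two transfer steps: first checking that $k\mathbb{I}_{N}$ really is a subgroup, so that the preceding theorem applies verbatim, and second confirming that division by $k$ is a well-defined isomorphism under which non-periodicity is preserved in both directions. Neither step is deep, but the statement would be vacuous without them, so I would record the subgroup identification and the isomorphism explicitly rather than leaving them implicit.
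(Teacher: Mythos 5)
Your proposal is correct and follows the same route as the paper: identify $k\mathbb{I}_{N}$ as the subgroup $H$ of $\mathbb{Z}_{kN}$, apply the preceding theorem to get that $(S,R\cap H)$ is a Vuza canon of $H$, and then divide by $k$. You merely spell out the isomorphism $\phi(ik)=i$ and the preservation of non-periodicity more explicitly than the paper does, which is a welcome but not essentially different elaboration.
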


\begin{proof}
Since $k\mathbb{I}_{N}$ can be seen as a subgroup of $\mathbb{Z}_{kN},$ we
can apply the previous theorem. These shows that $(S,R\cap k\mathbb{I}_{N})$
is a Vuza canon of $k\mathbb{Z}_{N}.~$ Taking the quotient by $k$, since $%
S^{\prime },R^{\prime }$ are in $\mathbb{Z}_{N},$ $(S^{\prime },R^{\prime })$
is a Vuza canon of $\mathbb{Z}_{N}.$
\end{proof}

\begin{example}
For instance, consider the Vuza canon $S=\{0,16,32,36,52,68\}$ and%
\begin{eqnarray*}
R &=&\{0,7,12,15,24,33,34,45,46,55,57,58,63,72,84,96,103,105,106, \\
&&111,117,118,129,130\}
\end{eqnarray*}%
of $\mathbb{Z}_{Nk}=\mathbb{Z}_{144}$ with $N=72$ and $k=2$. Since $H=2%
\mathbb{Z}_{72}$ is a subgroup of $\mathbb{Z}_{144},$ the pair 
\begin{eqnarray*}
S^{\prime } &=&S/2=\{0,8,16,18,26,34\} \\
R^{\prime } &=&(R\cap 2\mathbb{I}_{72})/2=%
\{0,6,12,17,23,29,36,42,48,53,59,65\}
\end{eqnarray*}%
is a Vuza canon of $\mathbb{Z}_{72}.$
\end{example}

\section{Constructing Vuza Canons}

In the following, we need two results.\ The first gives us a criteria to
show that a sum of two finite sets is a direct sum. And the second result
helps us to factorize cyclic groups.\ 

\begin{theorem}
Let G be a finite cyclic group and let S,R be two non-empty subsets of G.\
The following statements are equivalent.

(i) The sum $S+R$\ is direct and is equal to $G$.

(ii) $G=S+R$ and $\left\vert G\right\vert =\left\vert S\right\vert
\left\vert R\right\vert $.
\end{theorem}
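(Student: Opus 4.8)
The plan is to recast both statements in terms of the single addition map $\phi : S \times R \to G$ defined by $\phi(s,r) = s + r$, and then to invoke the elementary fact that a surjection between finite sets of equal cardinality is automatically a bijection. The point of contact with the definitions is the following dictionary: saying that the sum $S+R$ is \emph{direct} is exactly saying that $\phi$ is injective (each $g$ admits at most one representation $g = s+r$), while saying that $S+R = G$ is exactly saying that $\phi$ is surjective. Once this translation is in place, the whole theorem reduces to a statement about injectivity and surjectivity of a map between finite sets.

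For the implication $(i) \Rightarrow (ii)$, I would argue that directness of the sum gives injectivity of $\phi$, while the hypothesis $S+R = G$ gives surjectivity; hence $\phi$ is a bijection from $S \times R$ onto $G$. Counting then yields $\left\vert G\right\vert = \left\vert S \times R\right\vert = \left\vert S\right\vert\,\left\vert R\right\vert$, which is precisely $(ii)$. This direction is immediate once the dictionary above is established.

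For the converse $(ii) \Rightarrow (i)$, I would start from $G = S+R$, which makes $\phi$ surjective, together with the cardinality equality $\left\vert G\right\vert = \left\vert S\right\vert\,\left\vert R\right\vert = \left\vert S \times R\right\vert$, which says the domain and codomain of $\phi$ have the same finite size. The crucial step is then the pigeonhole observation: a surjective map between two finite sets of equal cardinality cannot identify two distinct points, so it is injective. Injectivity of $\phi$ is exactly the uniqueness of the representation $g = s+r$, i.e. the sum $S+R$ is direct; combined with $S+R = G$ this gives $(i)$.

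I expect no genuinely hard step here. The only things to be careful about are making the identification between ``$S+R$ is direct'' and ``$\phi$ is injective'' explicit, and flagging the essential use of the finiteness of $G$ in the pigeonhole step, since the equivalence would fail for infinite groups where a surjection need not be injective.
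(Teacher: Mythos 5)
Your proof is correct. Note that the paper itself does not supply a proof of this theorem; it simply defers to standard textbooks (in particular Szabo and Sands), so there is no in-paper argument to compare against. Your argument --- encoding directness of $S+R$ as injectivity of the addition map $\phi : S \times R \to G$, encoding $S+R=G$ as surjectivity, and then using the fact that a surjection between finite sets of equal cardinality is a bijection --- is exactly the standard counting proof those references give. Both implications are handled cleanly, and you correctly flag that finiteness is essential. The only cosmetic remark is that the hypothesis that $G$ is cyclic plays no role: your argument works verbatim for any finite abelian group.
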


The proof of this theorem is given in many textbooks on group theory.\ It is
also in the book of Szabo and Sands \citep{Sza2009}.

\begin{lemma}
\label{lemma_22}Let $a,b,c$ be positive integers such that $a$ is coprime
with $c$.\ We have%
\begin{equation*}
\begin{tabular}{lll}
$(i)$ & \quad & $\mathbb{I}_{a}\oplus a\mathbb{I}_{b}=\mathbb{Z}_{ab}$ \\ 
$(ii)$ &  & $c\mathbb{I}_{a}\oplus a\mathbb{I}_{bc}=\mathbb{Z}_{abc}$%
\end{tabular}%
\end{equation*}
\end{lemma}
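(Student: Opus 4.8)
The plan is to treat both parts with the cardinality criterion for direct sums stated in the preceding theorem: a sum $S+R$ of subsets of a finite cyclic group $G$ is direct and equals $G$ precisely when $G=S+R$ and $|G|=|S|\,|R|$. Thus for each identity it suffices to (a) count the two factors and check that the product equals the order of the ambient group, and (b) verify that the sum covers the whole group; directness then comes for free.

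For $(i)$ I would first record the cardinalities. In $\mathbb{Z}_{ab}$ the elements $0,a,2a,\dots,(b-1)a$ are genuinely distinct, since each lies in $[0,ab)$ and no two differ by a multiple of $ab$; hence $|a\mathbb{I}_b|=b$, while $|\mathbb{I}_a|=a$, so $|\mathbb{I}_a|\,|a\mathbb{I}_b|=ab=|\mathbb{Z}_{ab}|$. Coverage is then just Euclidean division: any $g\in\{0,1,\dots,ab-1\}$ writes uniquely as $g=qa+r$ with $0\le r<a$ and $0\le q<b$, that is $g=r+qa\in\mathbb{I}_a+a\mathbb{I}_b$. The criterion now gives $\mathbb{I}_a\oplus a\mathbb{I}_b=\mathbb{Z}_{ab}$.

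For $(ii)$ the cleanest route, and the place where the hypothesis $\gcd(a,c)=1$ enters, is to recognise $a\mathbb{I}_{bc}$ as a subgroup. Indeed $a\mathbb{I}_{bc}=\{0,a,2a,\dots,(bc-1)a\}$ is exactly the cyclic subgroup $\langle a\rangle$ of $\mathbb{Z}_{abc}$, of order $abc/\gcd(a,abc)=bc$. Tiling $\mathbb{Z}_{abc}$ by the subgroup $\langle a\rangle$ amounts to choosing a complete set of coset representatives, i.e.\ a complete residue system modulo $a$; equivalently $c\mathbb{I}_a\oplus a\mathbb{I}_{bc}=\mathbb{Z}_{abc}$ holds iff the $a$ elements of $c\mathbb{I}_a$ are pairwise incongruent modulo $a$. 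The main step is therefore to show that $\{0,c,2c,\dots,(a-1)c\}$ is a complete residue system modulo $a$, and this is exactly where coprimality is used: since $\gcd(a,c)=1$, multiplication by $c$ permutes $\mathbb{Z}_a$, so these residues are all distinct. Combined with $|c\mathbb{I}_a|\,|a\mathbb{I}_{bc}|=a\cdot bc=abc$, the criterion again yields a direct sum equal to the whole group.

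I expect the only genuine obstacle to be the surjectivity in $(ii)$: one must ensure that no wrap-around modulo $abc$ collapses elements of the two factors and that coprimality really delivers a full residue system modulo $a$. Alternatively, $(ii)$ can be obtained directly by solving $ci\equiv g\pmod a$ for the unique $i\in\{0,\dots,a-1\}$ (possible since $c$ is invertible modulo $a$), writing $g-ci=am$, and picking $j\equiv m\pmod{bc}$ with $0\le j<bc$, so that $ci+aj\equiv g\pmod{abc}$; this makes the role of the coprimality hypothesis completely transparent.
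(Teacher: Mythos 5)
Your proof is correct and follows essentially the same route as the paper: directness is obtained from the cardinality criterion $|S|\,|R|=|G|$ of the preceding theorem, and coverage in $(ii)$ rests on the fact that, because $\gcd(a,c)=1$, the elements of $c\mathbb{I}_{a}$ form a complete residue system modulo $a$, i.e.\ a transversal of the subgroup $a\mathbb{I}_{bc}=\left\langle a\right\rangle$. The paper presents this coverage step as an informal table-reading argument reducing $(ii)$ to $(i)$; your coset formulation is a cleaner, more rigorous rendering of the same underlying idea.
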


\begin{proof}
(i) Writing the sum as a table where each line $j$ corresponds to the set $%
ja+\mathbb{I}_{a}=\{ja,ja+1,ja+2,...,ja+a-1\}$ for $j=0,1,...,b-1$ and
reading the table line by line leads to the set $\mathbb{Z}_{ab}.$
Futhermore, the sum is direct since $\left\vert \mathbb{I}_{a}\right\vert
\left\vert a\mathbb{I}_{b}\right\vert =ab=\left\vert \mathbb{Z}%
_{ab}\right\vert .$

(ii) To prove the second statement, we write the table corresponding to the
sum line by line. Starting with $a\mathbb{I}_{bc}$ in the first line, we add
for each line the term $cj$.\ The sum is computed mod $abc$. As above, write
the sum as a table and then rewrite each line in lexicographic order. Then
each column $k$ is the set $k+\mathbb{I}_{a}$ with $k=0,a,2a,...,(bc-1)a.$
Taking the union column by column leads to%
\begin{eqnarray*}
&&\mathbb{I}_{a}\cup (a+\mathbb{I}_{a})\cup (2a+\mathbb{I}_{a})\cup \cdots
\cup ((bc-1)a+\mathbb{I}_{a}) \\
&=&\mathbb{I}_{a}+a\mathbb{I}_{bc}=\mathbb{Z}_{abc}
\end{eqnarray*}%
by the first property (i) and because since $a$ is coprime with $c$, the
collection of sets $ja+\mathbb{I}_{a}$ with $j=0,1,...,b-1$ is pairwise
disjoint.\ The sum is direct since $\left\vert c\mathbb{I}_{a}\right\vert
\left\vert a\mathbb{I}_{bc}\right\vert =a\times bc=\left\vert \mathbb{Z}%
_{abc}\right\vert .$
\end{proof}

\begin{example}
For $a=5,b=7$ and $c=2$, the sum is presented in the following table
(computations are done modulo $abc=70$)%
\begin{equation*}
\begin{tabular}{|l|l|l|l|l|l|l|}
\hline
$0$ & $5$ & $10$ & $15$ & $\cdots $ & $60$ & $65$ \\ \hline
$2$ & $7$ & $12$ & $17$ & $\cdots $ & $62$ & $67$ \\ \hline
$4$ & $9$ & $14$ & $19$ & $\cdots $ & $64$ & $69$ \\ \hline
$6$ & $11$ & $16$ & $21$ & $\cdots $ & $66$ & $1$ \\ \hline
$8$ & $13$ & $18$ & $23$ & $\cdots $ & $68$ & $3$ \\ \hline
\end{tabular}%
\end{equation*}%
Rewriting the last two lines in lexicographic order shows that the first
column becomes the set $\mathbb{I}_{5}=\{0,1,2,3,4\}$ and the next one are $%
k+\mathbb{I}_{a}.$ The union of all columns leads to $\mathbb{Z}_{70}$.
\end{example}

\begin{theorem}
Let $k$ be a positive integer such that $kN$ is a Vuza order.\ If the pair $%
(S,R)$ is a Vuza canon of $\mathbb{Z}_{N}$, then $(S^{\prime },R^{\prime })$
with%
\begin{eqnarray*}
S^{\prime } &=&kS\oplus \mathbb{I}_{k} \\
R^{\prime } &=&kR
\end{eqnarray*}%
is a Vuza canon of $\mathbb{Z}_{kN}$.
\end{theorem}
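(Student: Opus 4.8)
The plan is to verify in turn the three defining features of a Vuza canon: that $S'\oplus R'=\mathbb{Z}_{kN}$ is a genuine (direct) factorization, that the outer voice $R'$ is non-periodic, and — the crux — that the inner voice $S'$ is non-periodic. For the factorization I would first record that $\varphi:\mathbb{Z}_N\to\mathbb{Z}_{kN}$, $\varphi(x)=kx$, is an injective homomorphism with image the subgroup $k\mathbb{I}_N$. Applying $\varphi$ to $S\oplus R=\mathbb{Z}_N$ gives $kS\oplus kR=k\mathbb{I}_N$, the sum staying direct by injectivity, whence
\[
S'+R'=(kS\oplus\mathbb{I}_k)+kR=(kS\oplus kR)\oplus\mathbb{I}_k=k\mathbb{I}_N\oplus\mathbb{I}_k=\mathbb{Z}_{kN},
\]
the last equality being Lemma~\ref{lemma_22}$(i)$ with $a=k$, $b=N$. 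To promote this to a \emph{direct} sum I would invoke the cardinality criterion stated just before Lemma~\ref{lemma_22}: since $|S'|=k|S|$ and $|R'|=|R|$, we get $|S'|\,|R'|=k|S|\,|R|=kN=|\mathbb{Z}_{kN}|$, so the covering $S'+R'=\mathbb{Z}_{kN}$ forces directness.

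For the non-periodicity of $R'=kR$ I would argue by contraposition. A nonzero period $g$ satisfies $kR+g=kR$; picking any $kr_0\in kR$ shows $g=(kr_0+g)-kr_0$ is a difference of two multiples of $k$, hence $g=km$ with $m\not\equiv 0 \pmod N$. Then $k(R+m)=kR$, and injectivity of $\varphi$ yields $R+m=R$ with $m\neq 0$, contradicting the non-periodicity of $R$ in $\mathbb{Z}_N$.

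The real work is showing $S'=kS\oplus\mathbb{I}_k$ is non-periodic. I would view $S'$ as the union of the length-$k$ blocks $B_s=ks+\mathbb{I}_k$, $s\in S$, inside the partition of $\mathbb{Z}_{kN}$ into the blocks $B_i=ki+\mathbb{I}_k$, $i\in\mathbb{Z}_N$; thus $S'$ is exactly a union of \emph{full} blocks. Suppose $S'+g=S'$ with $g\neq 0$, and write $g=kq+t$, $0\le t<k$. If $t=0$ the translate is the union of the blocks $B_{s+q}$; comparing the elements of $S'$ and $S'+g$ that are $\equiv 0\pmod k$ (these are precisely $kS$ and $k(S+q)$) gives $S+q=S$ with $q\neq 0$, contradicting non-periodicity of $S$. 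The delicate case is $t\neq 0$: here each $B_s+g$ straddles two consecutive blocks, occupying the top $k-t$ offsets of $B_{s+q}$ and the bottom $t$ offsets of $B_{s+q+1}$. Since $S'+g$ must again be a union of full blocks, no block may be half-filled, which forces, for every $i$, that block $B_i$ receives both partial pieces or neither; a short bookkeeping of which blocks are hit shows this is equivalent to $s\in S\iff s-1\in S$ for all $s$, i.e. $S+1=S$, once more contradicting non-periodicity.

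I expect this straddling analysis to be the main obstacle, as it is exactly where the completeness of the interval $\mathbb{I}_k$ — rather than an arbitrary factor of length $k$ — is indispensable: a non-interval factor could distribute its offsets so as to reassemble into full blocks without forcing a period of $S$. Once $S'$ is shown non-periodic, combining the three steps gives that $(S',R')$ is a Vuza canon of $\mathbb{Z}_{kN}$, with the hypothesis that $kN$ is a Vuza order guaranteeing the statement is non-vacuous.
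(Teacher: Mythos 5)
Your proof is correct and follows essentially the same route as the paper: the factorization $S'\oplus R'=k(S\oplus R)\oplus\mathbb{I}_k=k\mathbb{I}_N\oplus\mathbb{I}_k=\mathbb{Z}_{kN}$ via Lemma~\ref{lemma_22}, and then reduction of any period of $S'$ or $R'$ to a period of $S$ or $R$. The only difference is one of completeness: the paper dismisses the non-periodicity of $S'$ and $R'$ in a single sentence (``otherwise $S$ and $R$ would be periodic''), whereas your block-straddling analysis for $S'=kS\oplus\mathbb{I}_k$ actually supplies the argument that assertion requires, so your write-up is the more rigorous of the two.
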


\begin{proof}
By the lemma \ref{lemma_22}, we have%
\begin{equation*}
S^{\prime }\oplus R^{\prime }=k(S\oplus R)\oplus \mathbb{I}_{k}=\mathbb{I}%
_{k}\oplus k\mathbb{I}_{N}=\mathbb{Z}_{kN}
\end{equation*}%
$S^{\prime }$ and $R^{\prime }$ are non-periodic in $\mathbb{Z}_{kN}$,
otherwise $S$ and $R\mathbb{\ }$will be periodic in $\mathbb{Z}_{N}.$Then $%
(S^{\prime },R^{\prime })$ is a Vuza canon of $\mathbb{Z}_{kN}.$
\end{proof}

In 2004 at the Mamux Seminar, F.\ Jedrzejewski gave a solution for
constructing large Vuza canons, depending only on the parameters $%
n_{1},n_{2},n_{3},p_{1}$ and $p_{2}$ (see \citep{Jed2006, Jed2009}). Let us
rephrase this result in the following manner:

\begin{theorem}
\label{thm_11}Let $p_{1}$, $p_{2}$ be different prime numbers, and $%
n_{1},n_{2},n_{3}$ be positive integers such that the product $n_{1}p_{1}$
is coprime with $n_{2}p_{2}$, then the pair $(S,R)$ defined by%
\begin{eqnarray*}
S &=&A+B \\
R &=&(U+V^{\prime }+K_{1})\cup (U^{\prime }+V+K_{2})
\end{eqnarray*}%
is a Vuza canon of $\mathbb{Z}_{N}$ with $N=n_{1}n_{2}n_{3}p_{1}p_{2}$ if $R$
is a non-periodic subset of $\mathbb{Z}_{N}$\ and:%
\begin{equation*}
\begin{tabular}{lll}
$A=n_{1}p_{1}n_{3}\mathbb{I}_{n_{2}}$ & $\qquad $ & $B=n_{2}p_{2}n_{3}%
\mathbb{I}_{n_{1}}$ \\ 
$U=n_{1}n_{2}n_{3}p_{1}\mathbb{I}_{p_{2}}$ & $\qquad $ & $%
V=n_{1}n_{2}n_{3}p_{2}\mathbb{I}_{p_{1}}$ \\ 
$U^{\prime }=n_{2}n_{3}\mathbb{I}_{p_{2}}$ & $\qquad $ & $V^{\prime
}=n_{1}n_{3}\mathbb{I}_{p_{1}}$ \\ 
$K_{1}=\{0\}$ & $\qquad $ & $K_{2}=\{1,2,...,n_{3}-1\}$%
\end{tabular}%
\end{equation*}
\end{theorem}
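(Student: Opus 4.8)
The plan is to verify the three defining requirements of a Vuza canon in turn: that $S\oplus R=\mathbb{Z}_N$ is a genuine (direct) factorization, that $S$ is non-periodic, and that $R$ is non-periodic (the last being granted by hypothesis). For the factorization I would lean on the cardinality criterion stated above: it suffices to establish $S+R=\mathbb{Z}_N$ together with $|S|\,|R|=N$, and directness then comes for free. To pin down $|S|$, note that $A=n_1p_1n_3\mathbb{I}_{n_2}$ and $B=n_2p_2n_3\mathbb{I}_{n_1}$ both consist of multiples of $n_3$, so $S=A+B$ lies in the subgroup $n_3\mathbb{Z}_N$, cyclic of order $N/n_3=n_1n_2p_1p_2$. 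Reducing by the scale $n_3$ and using $\gcd(n_1p_1,n_2p_2)=1$ shows $A\oplus B$ is direct (an equality $n_1p_1n_3(i-i')=n_2p_2n_3(j'-j)$ forces $i=i'$ and $j=j'$, since the index ranges $n_1,n_2$ are strictly smaller than $n_1p_1,n_2p_2$), whence $|S|=n_1n_2$.

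The heart of the argument is the factorization itself, and the key structural observation is that every one of $A,B,U,V,U',V'$ consists of multiples of $n_3$, so only $K_1=\{0\}$ and $K_2=\{1,\dots,n_3-1\}$ can carry a sum off the subgroup $n_3\mathbb{Z}_N$. Distributing over the union (and using $K_1=\{0\}$) gives
\[
S+R=(A+B+U+V')\cup(A+B+U'+V+K_2).
\]
The first piece lies in the residue class $0$ modulo $n_3$ while the second lies entirely in the classes $1,\dots,n_3-1$, so the two are automatically disjoint. It then remains only to show that each piece fills its cosets, for which I would prove the two identities $A+B+U+V'=n_3\mathbb{Z}_N$ and $A+B+U'+V=n_3\mathbb{Z}_N$, both as direct sums.

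Passing to $\mathbb{Z}_{N/n_3}$ through the isomorphism $n_3x\mapsto x$, these become statements in $\mathbb{Z}_{n_1n_2p_1p_2}$. The plan is to regroup the three terms sharing the common factor $n_1$ (namely $A,U,V'$; resp. the three sharing $n_2$, namely $B,U',V$), telescope them into the subgroup $n_1\mathbb{Z}$ (resp. $n_2\mathbb{Z}$) by repeated use of Lemma~\ref{lemma_22}(i), and then close up the remaining factor with Lemma~\ref{lemma_22}(ii), whose coprimality hypothesis is exactly $\gcd(n_1,n_2p_2)=1$ (resp. $\gcd(n_2,n_1p_1)=1$), both consequences of $\gcd(n_1p_1,n_2p_2)=1$. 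This bookkeeping — keeping the five scale factors straight and confirming the coprimality condition for the lemma at each step — is where I expect the genuine work to lie. Once the two identities hold, the first piece is all of $n_3\mathbb{Z}_N$ and the second is the union of the remaining $n_3-1$ cosets, so $S+R=\mathbb{Z}_N$; the same identities also force $U\oplus V'$ and $U'\oplus V$ to be direct, giving $|R|=p_1p_2+(n_3-1)p_1p_2=n_3p_1p_2$ and hence $|S|\,|R|=N$, so the criterion yields the direct factorization.

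Finally, for non-periodicity of $S$ I would exploit that $N/n_3=(n_1p_1)(n_2p_2)$ with coprime factors: by the Chinese Remainder Theorem $n_3\mathbb{Z}_N\cong\mathbb{Z}_{n_1p_1}\times\mathbb{Z}_{n_2p_2}$, and under this identification $S$ becomes a product box $P\times Q$, where $P$ and $Q$ are the images of $B$ and $A$, namely unit multiples of the intervals $\mathbb{I}_{n_1}$ and $\mathbb{I}_{n_2}$. A product set is periodic precisely when one of its factors is, and an interval $\mathbb{I}_{n_i}$ of length $n_i<n_ip_i$ (strict because $p_i\ge 2$) is non-periodic in $\mathbb{Z}_{n_ip_i}$, a property unaffected by multiplication by a unit. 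Hence neither factor is periodic, so $S$ is non-periodic; combined with the hypothesis that $R$ is non-periodic, this certifies $(S,R)$ as a Vuza canon.
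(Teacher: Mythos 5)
Your proposal is correct, and the main computation coincides with the paper's: the paper gives no separate proof of Theorem~\ref{thm_11} (it declares it ``straightforward using Lemma~\ref{lemma_22}''), but the detailed proof of its generalization, Theorem~\ref{thm_12}, performs exactly the telescoping you describe. The organization differs only cosmetically there: the paper first proves the absorption identities $X+V'=X+V$ and $Y+U'=Y+U$ (with $X=A+U$, $Y=B+V$) so that both branches of the union collapse to $H+K$, whereas you evaluate each branch $A+B+U+V'$ and $A+B+U'+V$ directly as the subgroup $n_3\mathbb{I}_{n_1p_1n_2p_2}$ and then use that $K_1=\{0\}$ and $K_2=\{1,\dots,n_3-1\}$ distribute the two branches over distinct residue classes mod $n_3$; both routes rest on the same applications of Lemma~\ref{lemma_22} and the same cardinality criterion for directness. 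The genuine divergence is in the non-periodicity of $S$: the paper invokes the Szab\'o--Sands result (Theorem~\ref{thm_szabo}), checking that $A+\langle B\rangle$ is direct, while you give a self-contained argument via the Chinese Remainder Theorem, writing $S$ as a product box of two unit multiples of intervals inside $\mathbb{Z}_{n_1p_1}\times\mathbb{Z}_{n_2p_2}$ and observing that a product set is periodic iff a factor is. Your version is more elementary in that it avoids the quoted theorem; the paper's version is the one that survives the generalizations later in the section (where $A$ and $B$ are deformed and no longer intervals). One small point worth making explicit in your write-up: your CRT argument establishes non-periodicity of $S$ inside the subgroup $n_3\mathbb{Z}_N$, and you should add the one-line remark that any period $g$ of a non-empty $S\subseteq n_3\mathbb{Z}_N$ satisfies $g=(s+g)-s\in n_3\mathbb{Z}_N$, so non-periodicity in the subgroup already gives non-periodicity in $\mathbb{Z}_N$.
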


The proof of this theorem is straightforward using the lemma \ref{lemma_22}.
This theorem has at least two generalizations.\ The first one consists in
choosing the sets $K_{1}$ and $K_{2}$ amongst a complete set $K$\ of coset
representatives for $\mathbb{Z}_{N}$ modulo a proper non-zero subgroup $H$.\
The non-periodicity of the subset $S$ is given by the following theorem
which is proved in \cite{Sza2009}. A subset $A$ of a group $G$ is \emph{%
normalized} if it contains the unit element of $G$ (0 in additive notation).
The smallest subgroup of $G$ that contains $A$ is denoted by $\left\langle
A\right\rangle $.

\begin{theorem}
\label{thm_szabo}If $A,B$ are normalized non-periodic sets of a cyclic group
and the sum $A+\left\langle B\right\rangle $ is direct, then $A+B$ is a
non-periodic set.
\end{theorem}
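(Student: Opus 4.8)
The plan is to argue by contradiction: I assume that $A+B$ has a nonzero period $g$ and show that $g$ is then forced to be a period of $A$ itself, contradicting the hypothesis that $A$ is non-periodic. Throughout I write $H=\langle B\rangle$ for the subgroup generated by $B$, and let $\pi\colon G\to G/H$ be the canonical projection. Recall that the periods of any subset form a subgroup, so $A+B$ is non-periodic precisely when its only period is $0$; hence it suffices to rule out a single nonzero period.

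First I would record the structural consequences of the directness of $A+\langle B\rangle=A+H$. Directness forces distinct elements of $A$ to lie in distinct cosets of $H$: if $a_1-a_2\in H$ with $a_1\neq a_2$, then $a_1=a_1+0=a_2+(a_1-a_2)$ would give two different representations of $a_1$ in $A+H$. In particular $\pi$ is injective on $A$. Since $B\subseteq H$, each translate $a+B$ lies inside the single coset $a+H$, and because these cosets are pairwise disjoint one gets, for every $a\in A$,
\[
(A+B)\cap(a+H)=a+B .
\]
This ``one block per coset'' description is the backbone of the argument.

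Next I would push the periodicity relation $g+(A+B)=A+B$ down to the quotient. Because $\pi(B)=\{0\}$ we have $\pi(A+B)=\pi(A)$, so applying $\pi$ yields $\bar g+\pi(A)=\pi(A)$ in $G/H$, where $\bar g=\pi(g)$. Now fix $a\in A$. Since $\bar g+\pi(a)\in\bar g+\pi(A)=\pi(A)$ and $\pi$ is injective on $A$, there is a unique $a'\in A$ with $\pi(a')=\bar g+\pi(a)$, i.e. $g+a+H=a'+H$. Translating by the period gives $g+(a+B)\subseteq A+B$, while at the same time $g+(a+B)\subseteq g+a+H=a'+H$; combining these with the block description from the previous step yields $g+(a+B)\subseteq(A+B)\cap(a'+H)=a'+B$. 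As translation preserves cardinality, both sides have $\lvert B\rvert$ elements, so the inclusion is an equality:
\[
g+a+B=a'+B .
\]
Setting $d:=g+a-a'$, which lies in $H$ by the choice of $a'$, this reads $d+B=B$, so $d$ is a period of $B$; since $B$ is non-periodic, $d=0$, hence $a'=a+g$. As $a'\in A$, this shows $a+g\in A$ for every $a\in A$, so $g+A\subseteq A$ and, by cardinality, $g+A=A$. Thus $g$ is a period of $A$, the desired contradiction. (The same computation also disposes of the sub-case $g\in H$, where it forces $g=d=0$ outright.)

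The step I expect to require the most care is the passage to the quotient together with the cardinality matching: one must check that the period genuinely permutes the blocks $a+B$ among the cosets prescribed by $\bar g$, and that directness of $A+\langle B\rangle$ combined with the non-periodicity of $B$ is exactly what upgrades the inclusion $g+(a+B)\subseteq a'+B$ to an equality, rather than leaving $g+(a+B)$ merely contained in a block. Once that matching is pinned down, reading off $a+g\in A$ and hence the period of $A$ is immediate.
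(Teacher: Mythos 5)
Your proof is correct. Note that the paper itself gives no proof of this statement; it is quoted from Szabo and Sands \citep{Sza2009}, so there is no argument in the text to compare yours against. Your blind reconstruction is a complete and self-contained justification: the directness of $A+\langle B\rangle$ gives injectivity of the quotient map on $A$ and hence the block decomposition $(A+B)\cap(a+H)=a+B$; a period $g$ of $A+B$ then permutes these blocks, the cardinality count upgrades $g+a+B\subseteq a'+B$ to an equality, non-periodicity of $B$ forces $a'=a+g$, and non-periodicity of $A$ finishes the contradiction. Every step checks out, including the finiteness used in the two cardinality arguments and the case $g\in H$. One small observation: you never actually use the hypothesis that $A$ and $B$ are normalized (you only need $B\subseteq\langle B\rangle$, which holds by definition of the generated subgroup, and periodicity is translation-invariant anyway); that hypothesis is an artifact of the normalization conventions in Szabo--Sands rather than something your argument requires.
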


\begin{theorem}
\label{thm_12}Let $p_{1}$, $p_{2}$ be different prime numbers, $%
n_{1},n_{2},n_{3}$ be positive integers such that $n_{1}p_{1}$ is coprime
with $n_{2}p_{2}$, and $H$ be the subgroup $H=n_{3}\mathbb{I}%
_{n_{1}p_{1}n_{2}p_{2}}$ of $\mathbb{Z}_{N}$ with $%
N=n_{1}n_{2}n_{3}p_{1}p_{2},$\ and let $K$ be a complete set of cosets
representatives for $\mathbb{Z}_{N}$ modulo H such that $K$ is the disjoint
union $K=K_{1}\cup K_{2}$, then the pair $(S,R)$ defined by%
\begin{eqnarray*}
S &=&A+B \\
R &=&(U+V^{\prime }+K_{1})\cup (U^{\prime }+V+K_{2})
\end{eqnarray*}%
is a Vuza canon of $\mathbb{Z}_{N}$ if $R$ is a non-periodic subset of $%
\mathbb{Z}_{N}$\ and:%
\begin{equation*}
\begin{tabular}{lll}
$A=n_{1}p_{1}n_{3}\mathbb{I}_{n_{2}}$ & $\qquad $ & $B=n_{2}p_{2}n_{3}%
\mathbb{I}_{n_{1}}$ \\ 
$U=n_{1}n_{2}n_{3}p_{1}\mathbb{I}_{p_{2}}$ & $\qquad $ & $%
V=n_{1}n_{2}n_{3}p_{2}\mathbb{I}_{p_{1}}$ \\ 
$U^{\prime }=\alpha n_{2}n_{3}\mathbb{I}_{p_{2}}$ & $\qquad $ & $V^{\prime
}=\beta n_{1}n_{3}\mathbb{I}_{p_{1}}$%
\end{tabular}%
\end{equation*}%
and $\alpha =1,n_{1}$ or $p_{1}$, and $\beta =1,n_{2}$ or $p_{2}.$
\end{theorem}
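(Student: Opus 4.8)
The plan is to reduce the factorization $\mathbb{Z}_N=S\oplus R$ to two ``local'' statements over the cosets of $H$, and then to deal with the non-periodicity of $S$ and $R$ separately. Write $m_{1}=n_{1}p_{1}$ and $m_{2}=n_{2}p_{2}$, so that $\gcd(m_{1},m_{2})=1$, $N=m_{1}m_{2}n_{3}$ and $|H|=m_{1}m_{2}$. The first observation is that each of $A,B,U,V,U^{\prime},V^{\prime}$ lies in $H=\langle n_{3}\rangle$, because every listed step is a multiple of $n_{3}$; in particular $S=A+B\subseteq H$. Since $U+V^{\prime}\subseteq H$ and $U^{\prime}+V\subseteq H$, the two pieces of $R$ satisfy $U+V^{\prime}+K_{1}\subseteq H+K_{1}$ and $U^{\prime}+V+K_{2}\subseteq H+K_{2}$, and as $K=K_{1}\cup K_{2}$ is a disjoint transversal of $H$ these coset blocks are pairwise disjoint. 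Consequently $R\cap(H+k)=U+V^{\prime}+k$ for $k\in K_{1}$ and $R\cap(H+k)=U^{\prime}+V+k$ for $k\in K_{2}$. Using the intersection identity $(S+R)\cap H=S+(R\cap H)$ proved above together with $S\subseteq H$, the factorization will follow coset by coset once I establish the two local factorizations
\begin{equation*}
S\oplus U\oplus V^{\prime}=H,\qquad S\oplus U^{\prime}\oplus V=H.
\end{equation*}

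For these I would first record, via Lemma \ref{lemma_22}$(i)$, that $A+U=m_{1}n_{3}(\mathbb{I}_{n_{2}}\oplus n_{2}\mathbb{I}_{p_{2}})=\langle m_{1}n_{3}\rangle$, the full subgroup of order $m_{2}$, and symmetrically $B+V=\langle m_{2}n_{3}\rangle$, the full subgroup of order $m_{1}$. Because $\gcd(m_{1},m_{2})=1$ these subgroups meet trivially and $\langle m_{1}n_{3}\rangle\oplus\langle m_{2}n_{3}\rangle=\langle n_{3}\rangle=H$, which already gives $S\oplus U\oplus V=H$. To pass from $V$ to $V^{\prime}$ I would project $H$ onto $H/\langle m_{1}n_{3}\rangle\cong\mathbb{Z}_{m_{1}}$: there $S+U=A+B+U$ collapses to the interval $\mathbb{I}_{n_{1}}$ coming from $B$, while $V^{\prime}=\beta n_{1}n_{3}\mathbb{I}_{p_{1}}$ maps to $\{\,n_{1}(\beta j\bmod p_{1}):0\le j<p_{1}\,\}$. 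The crucial point is that this image equals $n_{1}\mathbb{I}_{p_{1}}\pmod{m_{1}}$ precisely when $\gcd(\beta,p_{1})=1$, in which case $\mathbb{I}_{n_{1}}\oplus n_{1}\mathbb{I}_{p_{1}}=\mathbb{Z}_{m_{1}}$ (Lemma \ref{lemma_22}$(i)$) shows the projection is onto. Hence $S\oplus U\oplus V^{\prime}$ meets every coset of $\langle m_{1}n_{3}\rangle$, so equals $H$, and since $|S|\,|U|\,|V^{\prime}|=n_{1}n_{2}p_{1}p_{2}=|H|$ the sum is direct by the cardinality criterion stated above. The admissible values $\beta\in\{1,n_{2},p_{2}\}$ are exactly the natural candidates coprime to $p_{1}$ (each of $n_{2},p_{2}$ divides $m_{2}$, which is coprime to $p_{1}$); an identical projection onto $H/\langle m_{2}n_{3}\rangle\cong\mathbb{Z}_{m_{2}}$ handles $S\oplus U^{\prime}\oplus V=H$, the relevant condition now being $\gcd(\alpha,p_{2})=1$, satisfied by $\alpha\in\{1,n_{1},p_{1}\}$.

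It remains to prove $S$ non-periodic, $R$ being non-periodic by hypothesis. For this I would invoke Theorem \ref{thm_szabo}: $A$ and $B$ are normalized, and in the subgroups $\langle m_{1}n_{3}\rangle\cong\mathbb{Z}_{m_{2}}$, resp. $\langle m_{2}n_{3}\rangle\cong\mathbb{Z}_{m_{1}}$, they are the intervals $\mathbb{I}_{n_{2}}$, resp. $\mathbb{I}_{n_{1}}$, which are proper initial segments since $m_{2}=n_{2}p_{2}>n_{2}$ and $m_{1}=n_{1}p_{1}>n_{1}$; a proper interval in a cyclic group is non-periodic, and non-periodicity in a subgroup forces non-periodicity in $\mathbb{Z}_{N}$. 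Moreover $A\oplus\langle B\rangle=A\oplus\langle m_{2}n_{3}\rangle$ is direct because $A\subseteq\langle m_{1}n_{3}\rangle$ meets $\langle m_{2}n_{3}\rangle$ trivially. Thus $S=A+B$ is non-periodic, and combining $S\oplus R=\mathbb{Z}_{N}$ with the non-periodicity of both factors shows that $(S,R)$ is a Vuza canon.

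The hard part is the passage from $V$ to $V^{\prime}$ and from $U$ to $U^{\prime}$, i.e.\ pinning down exactly which scalings preserve the factorization. The subtlety is that $n_{1}$ and $p_{1}$ need not be coprime, so $\mathbb{Z}_{m_{1}}$ does not split as $\mathbb{Z}_{n_{1}}\times\mathbb{Z}_{p_{1}}$ and one cannot argue componentwise; the correct invariant is that $\beta n_{1}\mathbb{I}_{p_{1}}$ and $n_{1}\mathbb{I}_{p_{1}}$ coincide \emph{as subsets modulo} $m_{1}$ iff $\beta$ is invertible mod $p_{1}$, which is precisely what makes $\{1,n_{2},p_{2}\}$ and $\{1,n_{1},p_{1}\}$ the admissible lists. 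One must also check that $U^{\prime}$ and $V^{\prime}$ remain genuine $p_{2}$- and $p_{1}$-element progressions (the order of each step stays $\ge p_{2}$, resp.\ $\ge p_{1}$, for every listed scalar), so that the count $|S|\,|R|=n_{1}n_{2}\cdot n_{3}p_{1}p_{2}=N$ is exact; with that in hand the cardinality criterion upgrades each coset-wise covering to a genuine direct factorization.
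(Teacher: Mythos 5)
Your proof is correct and follows essentially the same route as the paper: you replace $V^{\prime}$ by $V$ and $U^{\prime}$ by $U$ after adding the complementary subgroups $A+U=\langle n_{1}p_{1}n_{3}\rangle$ and $B+V=\langle n_{2}p_{2}n_{3}\rangle$ (the paper phrases this as $X+V^{\prime}=X+V$ and $Y+U^{\prime}=Y+U$ via Lemma \ref{lemma_22}, you phrase it as equality of images in the quotient $H/\langle n_{1}p_{1}n_{3}\rangle$, which is the same computation), then conclude $S\oplus R=H\oplus K=\mathbb{Z}_{N}$ by the cardinality criterion and get non-periodicity of $S$ from Theorem \ref{thm_szabo}. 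A minor plus of your write-up is that you explicitly verify that $A$ and $B$ are themselves non-periodic (proper intervals in those subgroups), a hypothesis of Theorem \ref{thm_szabo} that the paper leaves implicit.
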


\begin{proof}
In order to calculate the sum $S+R$, suppose that $A+U+V^{\prime }=A+U+V$
and $B+V+U^{\prime }=B+V+U,$ then the sum is given by 
\begin{eqnarray*}
S+R &=&\left( A+B\right) +\left( (U+V^{\prime }+K_{1})\cup (U^{\prime
}+V+K_{2})\right) \\
&=&\left( A+B+U+V^{\prime }+K_{1}\right) \cup \left( A+B+U^{\prime
}+V+K_{2}\right) \\
&=&\left( A+B+U+V+K_{1}\right) \cup \left( A+B+U+V+K_{2}\right) \\
&=&A+B+U+V+\left( K_{1}\cup K_{2}\right) \\
&=&\left( A+U\right) +\left( B+V\right) +K
\end{eqnarray*}%
By the lemma \ref{lemma_22},%
\begin{equation*}
\begin{tabular}{l}
$X=A+U=n_{1}n_{3}p_{1}\mathbb{I}_{n_{2}p_{2}}$ \\ 
$Y=B+V=n_{2}n_{3}p_{2}\mathbb{I}_{n_{1}p_{1}}$%
\end{tabular}%
\end{equation*}%
it follows that 
\begin{equation*}
H=X+Y=n_{3}\mathbb{I}_{n_{1}p_{1}n_{2}p_{2}}
\end{equation*}%
and the sum is equal to the cyclic group:%
\begin{eqnarray*}
S+R &=&X+Y+K \\
&=&H+K=\mathbb{Z}_{N}
\end{eqnarray*}%
Since $\left\vert S\right\vert \left\vert R\right\vert =\left\vert
H\right\vert \left\vert K\right\vert =n_{1}p_{1}n_{2}p_{2}\times n_{3}=N,$
the sum $S+R$ is direct. To prove that $S$ is non-periodic, we apply the
theorem \ref{thm_szabo}.\ The smallest subgroup that contains $B$ is $%
\left\langle B\right\rangle =n_{2}p_{2}n_{3}\mathbb{I}_{n_{1}p_{1}}$ and the
sum 
\begin{equation*}
A+\left\langle B\right\rangle =n_{3}(n_{1}p_{1}\mathbb{I}_{n_{2}}+n_{2}p_{2}%
\mathbb{I}_{n_{1}p_{1}})
\end{equation*}%
is direct since $n_{1}p_{1}$ is coprime with $n_{2}p_{2}$.\ Suppose that
there are $x,x^{\prime }\in A$ and $y,y^{\prime }\in \left\langle
B\right\rangle $ verifying the equation%
\begin{equation*}
n_{1}p_{1}x+n_{2}p_{2}y=n_{1}p_{1}x^{\prime }+n_{2}p_{2}y^{\prime }
\end{equation*}%
The solutions are $x-x^{\prime }=-n_{2}p_{2}k$ and $y^{\prime
}-y=n_{1}p_{1}k $ where $k$ is an integer.\ Since $x$ and $x^{\prime }$ are
non-negative elements of $A,$\ multiple of $n_{1}p_{1}$ and $n_{1}p_{1}$ is
coprime with $n_{2}p_{2},$ it follows that $k=0,$ $x=x^{\prime }$ and $%
y=y^{\prime }$. Thus, there is only one decomposition of each element of $%
A+\left\langle B\right\rangle .\ $The sum $A+\left\langle B\right\rangle $
is direct and by the theorem above $S$ is a non-periodic subset of $\mathbb{Z%
}_{N}.$ If $R$ is non-periodic, it follows that $(S,R)$ is a Vuza canon of $%
\mathbb{Z}_{N}$.\ Now, it remains to show that%
\begin{eqnarray*}
A+U+V^{\prime } &=&A+U+V \\
X+V &=&X+V^{\prime }
\end{eqnarray*}%
Since $\beta =1$ or $\beta $ is coprime with $p_{1}$, we have by the lemma %
\ref{lemma_22}: 
\begin{eqnarray*}
X+V^{\prime } &=&n_{1}p_{1}n_{3}\mathbb{I}_{n_{2}p_{2}}+\beta n_{1}n_{3}%
\mathbb{I}_{p_{1}} \\
&=&n_{1}n_{3}(\beta \mathbb{I}_{p_{1}}+p_{1}\mathbb{I}_{n_{2}p_{2}}) \\
&=&n_{1}n_{3}\mathbb{I}_{n_{2}p_{1}p_{2}}
\end{eqnarray*}%
In the same way,%
\begin{eqnarray*}
X+V &=&n_{1}p_{1}n_{3}\mathbb{I}_{n_{2}p_{2}}+n_{1}n_{2}n_{3}p_{2}\mathbb{I}%
_{p_{1}} \\
&=&n_{1}n_{3}(p_{1}\mathbb{I}_{n_{2}p_{2}}+n_{2}p_{2}\mathbb{I}_{p_{1}}) \\
&=&n_{1}n_{3}\mathbb{I}_{n_{2}p_{1}p_{2}}
\end{eqnarray*}%
Thus $X+V^{\prime }=X+V$. The same expression can be set for $B$:%
\begin{eqnarray*}
Y+U^{\prime } &=&n_{2}p_{2}n_{3}\mathbb{I}_{n_{1}p_{1}}+\alpha n_{2}n_{3}%
\mathbb{I}_{p_{2}} \\
&=&n_{2}n_{3}(p_{2}\mathbb{I}_{n_{1}p_{1}}+\alpha \mathbb{I}_{p_{2}}) \\
&=&n_{2}n_{3}\mathbb{I}_{n_{1}p_{1}p_{2}}
\end{eqnarray*}%
and%
\begin{eqnarray*}
Y+U &=&n_{2}p_{2}n_{3}\mathbb{I}_{n_{1}p_{1}}+n_{1}n_{2}n_{3}p_{1}\mathbb{I}%
_{p_{2}} \\
&=&n_{2}n_{3}(p_{2}\mathbb{I}_{n_{1}p_{1}}+n_{1}p_{1}\mathbb{I}_{p_{2}}) \\
&=&n_{2}n_{3}\mathbb{I}_{n_{1}p_{1}p_{2}}
\end{eqnarray*}%
and consequently,%
\begin{equation*}
Y+U^{\prime }=Y+U
\end{equation*}
\end{proof}

The demonstration depends only on the formulas of the lemma.\ Changing the
definition of $H$ leads to a new theorem.\ The proof keeps the same
arguments by exchanging $p_{1}$ and $n_{3}$ with $\alpha =n_{1}p_{1},\beta
=n_{2}p_{2}.$

\begin{theorem}
Let $p_{1}$, $p_{2}$ be different prime numbers, and $n_{1},n_{2},n_{3}$ be
positive integers such that $n_{1}p_{1}$ is coprime with $n_{2}p_{2}$, and $%
n_{1}n_{3}$ is coprime with $n_{2}p_{2}.$\ Let $H$ be the subgroup $H=p_{1}%
\mathbb{I}_{n_{1}n_{2}n_{3}p_{2}}$ of $\mathbb{Z}_{N}$ with $%
N=n_{1}n_{2}n_{3}p_{1}p_{2},$ and $K$ be a complete set of cosets
representatives for $\mathbb{Z}_{N}$ modulo H such that $K=K_{1}\cup K_{2}$,
then the pair $(S,R)$ defined by%
\begin{eqnarray*}
S &=&A+B \\
R &=&(U+V^{\prime }+K_{1})\cup (U^{\prime }+V+K_{2})
\end{eqnarray*}%
is a Vuza canon of $\mathbb{Z}_{N}$ if $R$ is a non-periodic subset of $%
\mathbb{Z}_{N}$\ and:%
\begin{equation*}
\begin{tabular}{lll}
$A=n_{1}p_{1}n_{3}\mathbb{I}_{n_{2}}$ & $\qquad $ & $B=n_{2}p_{2}p_{1}%
\mathbb{I}_{n_{1}}$ \\ 
$U=n_{1}n_{2}n_{3}p_{1}\mathbb{I}_{p_{2}}$ & $\qquad $ & $V=n_{1}p_{1}%
\mathbb{I}_{n_{3}}$ \\ 
$U^{\prime }=n_{2}p_{1}\mathbb{I}_{p_{2}}$ & $\qquad $ & $V^{\prime
}=n_{1}p_{1}n_{2}p_{2}\mathbb{I}_{n_{3}}$%
\end{tabular}%
\end{equation*}
\end{theorem}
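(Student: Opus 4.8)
The statement is the exact analogue of Theorem \ref{thm_12} in which the distinguished subgroup $n_3\mathbb{I}_{n_1p_1n_2p_2}$ has been replaced by $H=p_1\mathbb{I}_{n_1n_2n_3p_2}$, so the plan is to reproduce that proof with the roles of $p_1$ and $n_3$ interchanged, exactly as the preceding remark suggests. Everything reduces to one clean assertion: each of the two pieces $A+B+U+V'$ and $A+B+U'+V$ is equal to the subgroup $H$. Granting this and writing the chosen splitting of a transversal as $K=K_1\cup K_2$ (disjoint), distributivity of $+$ over $\cup$ gives $S+R=\bigl[(A+B+U+V')+K_1\bigr]\cup\bigl[(A+B+U'+V)+K_2\bigr]=(H+K_1)\cup(H+K_2)=H+K=\mathbb{Z}_N$, so $S+R=\mathbb{Z}_N$.

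To obtain the assertion I would first set $X=A+U$ and $Y=B+V$ and evaluate them with Lemma \ref{lemma_22}: pulling out the common scalar $p_1n_1n_3$ from $X$ and applying part $(i)$ gives $X=p_1n_1n_3\mathbb{I}_{n_2p_2}$, and regrouping the terms of $X+Y$ and applying part $(ii)$ gives $X+Y=H$; this last step is the first place the new hypothesis $\gcd(n_1n_3,n_2p_2)=1$ is consumed. I would then prove the two absorption identities $A+U+V'=A+U+V$ and $B+V+U'=B+V+U$. After extracting a common factor, each compares an expression of the form $c\,\mathbb{I}_a+a\,\mathbb{I}_{bc}$ (handled by Lemma \ref{lemma_22}$(ii)$) with one of the form $\mathbb{I}_a+a\,\mathbb{I}_b$ (handled by Lemma \ref{lemma_22}$(i)$), and both evaluate to the same scaled copy of $\mathbb{Z}_{abc}$ precisely because the relevant factors are coprime. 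Combining the two identities with $X+Y=H$ yields $A+B+U+V'=A+B+U'+V=X+Y=H$, which is the assertion above.

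It remains to verify directness and non-periodicity, and I would treat $S=A+B$ first. Since $A$ and $B$ are normalized and non-periodic and the sum $A+\langle B\rangle$ is direct — the last fact following from $\gcd(n_1n_3,n_2p_2)=1$ by the same bounded-residue computation as in Theorem \ref{thm_12} — Theorem \ref{thm_szabo} shows $S$ is non-periodic, and in particular $A+B$ is direct, so $|S|=n_1n_2$. The two components of $R$ lie over the disjoint sets $K_1$ and $K_2$, hence in distinct cosets of $H$, so they are disjoint and $|R|=|U|\,|V'|\,|K_1|+|U'|\,|V|\,|K_2|=n_3p_2\,|K|=n_3p_1p_2$. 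Then $|S|\,|R|=n_1n_2n_3p_1p_2=N$, and the cardinality criterion for direct sums turns the surjectivity $S+R=\mathbb{Z}_N$ into $S\oplus R=\mathbb{Z}_N$. As the non-periodicity of $R$ is assumed, $(S,R)$ is a Vuza canon.

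The delicate point — and the step I expect to be the main obstacle — is the bookkeeping in the two absorption identities: in each component of $R$ a \emph{large} translate must be paired with a \emph{small} one (that is, $U$ with $V'$ and $U'$ with $V$) so that the two pieces each fill all of $H$ rather than a proper subset. This is exactly where the extra coprimality hypothesis $\gcd(n_1n_3,n_2p_2)=1$, which does not appear in Theorem \ref{thm_12}, becomes indispensable, since it is what lets Lemma \ref{lemma_22}$(ii)$ fire once $p_1$ and $n_3$ have swapped roles. Once the pairing is pinned down, every remaining step is a direct appeal to Lemma \ref{lemma_22} and the verification is routine.
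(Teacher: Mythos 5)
Your overall strategy is exactly the one the paper intends (the paper gives no separate proof of this theorem, only the remark that one repeats the proof of Theorem \ref{thm_12} after exchanging $p_{1}$ and $n_{3}$), but one of the two absorption identities you assert is false for the sets as they are printed in the statement, and this is a genuine gap rather than routine bookkeeping. You correctly isolate the requirement that each component of $R$ must pair a \emph{large} translate with a \emph{small} one; however, in the printed statement it is $V^{\prime}=n_{1}p_{1}n_{2}p_{2}\mathbb{I}_{n_{3}}$ that is large and $V=n_{1}p_{1}\mathbb{I}_{n_{3}}$ that is small, so $R=(U+V^{\prime}+K_{1})\cup(U^{\prime}+V+K_{2})$ pairs large with large and small with small --- the opposite of what you require. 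The identity $B+V+U^{\prime}=B+V+U$ then fails. Concretely, take $n_{1}=p_{1}=n_{3}=2$, $n_{2}=p_{2}=3$, $N=72$ (all hypotheses hold): $A=\{0,8,16\}$, $B=\{0,18\}$, $U=\{0,24,48\}$, $V=\{0,4\}$, $U^{\prime}=\{0,6,12\}$, $V^{\prime}=\{0,36\}$, and $A+B+U^{\prime}+V$ has only $24$ elements while $\left\vert H\right\vert=36$; choosing $K_{1}=\{0\}$, $K_{2}=\{1\}$ gives a non-periodic $R$ with $\left\vert S+R\right\vert=60<72$, so $(S,R)$ is not even a rhythmic canon.

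Your first identity $A+U+V^{\prime}=A+U+V$ does hold (both sides equal $n_{1}p_{1}\mathbb{I}_{n_{2}n_{3}p_{2}}$, by parts $(ii)$ and $(i)$ of Lemma \ref{lemma_22} respectively), and note that this is where $\gcd(n_{3},n_{2}p_{2})=1$ is actually consumed --- not in $X+Y=H$ as you claim, since with the printed $V$ the set $Y=B+V=p_{1}(n_{2}p_{2}\mathbb{I}_{n_{1}}+n_{1}\mathbb{I}_{n_{3}})$ is not a scaled interval at all. Your computations are implicitly carried out for the corrected statement in which $V$ and $V^{\prime}$ are transposed (equivalently, $R=(U+V+K_{1})\cup(U^{\prime}+V^{\prime}+K_{2})$), which is indeed what the ``exchange $p_{1}$ and $n_{3}$'' recipe produces from Theorem \ref{thm_12}; for that version your argument goes through. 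As written, though, you assert an identity that is false for the stated sets, and the proof collapses at the second component of $R$; you should either exhibit and repair the transposition explicitly or the verification cannot be completed.
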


\noindent Or to its dual version:

\begin{theorem}
Let $p_{1}$, $p_{2}$ be different prime numbers, and $n_{1},n_{2},n_{3}$ be
positive integers such that $n_{1}p_{1}$ is coprime with $n_{2}p_{2}$, and $%
n_{1}p_{1}$ is coprime with $n_{2}n_{3}.$ Let $H$ be the subgroup $H=p_{2}%
\mathbb{I}_{n_{1}n_{2}n_{3}p_{1}}$ of $\mathbb{Z}_{N}$ with $%
N=n_{1}n_{2}n_{3}p_{1}p_{2}$, and $K$ be a complete set of cosets
representatives for $\mathbb{Z}_{N}$ modulo $H$ such that $K=K_{1}\cup K_{2}$%
, then the pair $(S,R)$ defined by%
\begin{eqnarray*}
S &=&A+B \\
R &=&(U+V^{\prime }+K_{1})\cup (U^{\prime }+V+K_{2})
\end{eqnarray*}%
is a Vuza canon of $\mathbb{Z}_{N}$ if $R$ is a non-periodic subset of $%
\mathbb{Z}_{N}$\ and:%
\begin{equation*}
\begin{tabular}{lll}
$A=n_{1}p_{1}p_{2}\mathbb{I}_{n_{2}}$ & $\qquad $ & $B=n_{2}n_{3}p_{2}%
\mathbb{I}_{n_{1}}$ \\ 
$U=n_{1}n_{2}p_{1}p_{2}\mathbb{I}_{n_{3}}$ & $\qquad $ & $%
V=n_{1}n_{2}n_{3}p_{2}\mathbb{I}_{p_{1}}$ \\ 
$U^{\prime }=n_{2}p_{2}\mathbb{I}_{n_{3}}$ & $\qquad $ & $V^{\prime
}=n_{1}p_{2}\mathbb{I}_{p_{1}}$%
\end{tabular}%
\end{equation*}
\end{theorem}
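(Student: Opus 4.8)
The plan is to run the proof of Theorem~\ref{thm_12} almost verbatim, with the roles of $p_2$ and $n_3$ interchanged, and with the new hypothesis ``$n_1p_1$ coprime with $n_2n_3$'' playing the part that ``$n_1p_1$ coprime with $n_2p_2$'' played there. First I would compute $S+R$. Distributing $S=A+B$ over the union defining $R$ gives
\[
S+R=(A+B+U+V'+K_1)\cup(A+B+U'+V+K_2),
\]
so the whole matter reduces to showing that both ``cores'' $A+B+U+V'$ and $A+B+U'+V$ collapse onto the subgroup $H$. To keep the bookkeeping clean I set $X=A+U$ and $Y=B+V$. Factoring out the common multipliers and applying Lemma~\ref{lemma_22}(i) I would get $X=n_1p_1p_2\,\mathbb{I}_{n_2n_3}$ and $Y=n_2n_3p_2\,\mathbb{I}_{n_1p_1}$, and then $X+Y=p_2\bigl(n_1p_1\,\mathbb{I}_{n_2n_3}+n_2n_3\,\mathbb{I}_{n_1p_1}\bigr)=p_2\,\mathbb{I}_{n_1n_2n_3p_1}=H$, the inner direct sum being exactly Lemma~\ref{lemma_22}(ii) with $n_1p_1$ coprime to $n_2n_3$.

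Next I would establish the two ``swap'' identities $X+V'=X+V$ and $Y+U'=Y+U$. Each side again factors as a common multiplier times a sum of one of the two shapes in Lemma~\ref{lemma_22}: the first pair reduces to $n_1p_2\,\mathbb{I}_{n_2n_3p_1}$ (needing $p_1$ coprime to $n_2n_3$), the second to $n_2p_2\,\mathbb{I}_{n_1p_1n_3}$ (needing $n_1p_1$ coprime to $n_3$); both coprimalities are immediate consequences of $n_1p_1$ coprime to $n_2n_3$. Granting these, I group the first core as $B+(X+V')=B+(X+V)=X+Y=H$ and the second as $A+(Y+U')=A+(Y+U)=X+Y=H$, whence $S+R=(H+K_1)\cup(H+K_2)=H+K=\mathbb{Z}_N$.

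I would then get directness from the cardinality criterion for direct sums stated before Lemma~\ref{lemma_22}. Here $|S|=|A|\,|B|=n_1n_2$ (the sum $A+B$ is direct precisely because $n_1p_1$ is coprime with $n_2n_3$). The two parts of $R$ lie in disjoint $H$-cosets indexed by $K_1$ and $K_2$ (since $U+V'$ and $U'+V$ both sit inside $H=p_2\mathbb{Z}_N$), and each of $U+V'$, $U'+V$ has $n_3p_1$ elements, so $|R|=n_3p_1\bigl(|K_1|+|K_2|\bigr)=n_3p_1\,|K|=n_3p_1p_2$. Hence $|S|\,|R|=n_1n_2n_3p_1p_2=N$, and since $S+R=\mathbb{Z}_N$ the sum is direct. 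For non-periodicity, $R$ is non-periodic by hypothesis, so only $S$ remains, and I would invoke Theorem~\ref{thm_szabo}: $A,B$ are normalized and non-periodic (proper arithmetic progressions inside the cyclic subgroups they generate), and with $\langle B\rangle=n_2n_3p_2\,\mathbb{I}_{n_1p_1}$ one has
\[
A+\langle B\rangle=p_2\bigl(n_1p_1\,\mathbb{I}_{n_2}+n_2n_3\,\mathbb{I}_{n_1p_1}\bigr),
\]
a direct sum once more by $n_1p_1$ coprime to $n_2n_3$; Theorem~\ref{thm_szabo} then gives $S$ non-periodic, so $(S,R)$ is a Vuza canon.

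I expect the only real obstacle to be the two swap identities: one must group the scaled intervals so that each side lands on one of the two normal forms of Lemma~\ref{lemma_22} and then check that the relevant factor (here $p_1$, respectively $n_3$, against $n_2n_3$) is coprime. The conceptual point, which I would emphasize, is that the single new hypothesis ``$n_1p_1$ coprime with $n_2n_3$'' is doing all of this work at once—directness of $A+B$, the equality $X+Y=H$, both swap identities, and the directness of $A+\langle B\rangle$—so that everything else is the routine multiplier-factoring already used for Theorem~\ref{thm_12}.
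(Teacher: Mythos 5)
Your proposal is correct and follows exactly the route the paper intends: the paper dispatches this theorem with the remark that ``the proof keeps the same arguments'' as Theorem~\ref{thm_12} under the exchange of $p_2$ and $n_3$, and your write-up carries out precisely that substitution, with the new hypothesis $\gcd(n_1p_1,n_2n_3)=1$ supplying the coprimality needed in each application of Lemma~\ref{lemma_22}. You in fact verify more details (the two swap identities, the cardinality count, and the $A+\langle B\rangle$ directness) than the paper records.
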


The second generalization of the theorem \ref{thm_11} consists in
constructing the sets $U^{\prime }$ and $V^{\prime }$ by adding a set of
elements of respectively $B$ and $A$.

\begin{theorem}
Let $p_{1}$, $p_{2}$ be different prime numbers, $n_{1},n_{2},n_{3}$ be
positive integers such that $n_{1}p_{1}$ is coprime with $n_{2}p_{2}$, and $%
H $ be the subgroup $H=n_{3}\mathbb{I}_{n_{1}p_{1}n_{2}p_{2}}$ of $\mathbb{Z}%
_{N}$ with $N=n_{1}n_{2}n_{3}p_{1}p_{2},$\ and let $K$ be a complete set of
cosets representatives for $\mathbb{Z}_{N}$ modulo H such that $K$ is the
disjoint union $K=K_{1}\cup K_{2}$.\ Let A, B, U and V the sets%
\begin{equation*}
\begin{tabular}{lll}
$A=n_{1}p_{1}n_{3}\mathbb{I}_{n_{2}}$ & $\qquad $ & $B=n_{2}p_{2}n_{3}%
\mathbb{I}_{n_{1}}$ \\ 
$U=n_{1}n_{2}n_{3}p_{1}\mathbb{I}_{p_{2}}$ & $\qquad $ & $%
V=n_{1}n_{2}n_{3}p_{2}\mathbb{I}_{p_{1}}$%
\end{tabular}%
\end{equation*}%
Construct the set $U^{\prime }$ by replacing non-zero elements $u_{j}\in U$
by their sum with some non-zero elements of B (non necessary different) 
\begin{equation*}
U^{\prime }=\left( U\backslash \{u_{1},...,u_{n}\}\right) \cup
\{u_{1}+b_{1},...,u_{n}+b_{n}\}
\end{equation*}%
and the set $V^{\prime }$ by replacing some non-zero elements $v_{i}\in V$
by their sum with some non-zero elements of A%
\begin{equation*}
V^{\prime }=\left( V\backslash \{v_{1},...,v_{m}\}\right) \cup
\{v_{1}+a_{1},...,v_{m}+a_{m}\}
\end{equation*}%
Then the pair 
\begin{eqnarray*}
S &=&A+B \\
R &=&(U+V^{\prime }+K_{1})\cup (U^{\prime }+V+K_{2})
\end{eqnarray*}%
is a Vuza canon of $\mathbb{Z}_{N}$ if $R$ is non-periodic.
\end{theorem}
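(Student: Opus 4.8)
The plan is to imitate the telescoping computation from the proof of Theorem~\ref{thm_12}, so that the whole argument reduces to re-establishing the two absorption identities
\[
A+U+V'=A+U+V,\qquad B+V+U'=B+V+U .
\]
Once these hold, the Minkowski sum distributes over the union defining $R$ and both pieces collapse onto $A+B+U+V$, giving
\[
S+R=(A+B+U+V+K_{1})\cup(A+B+U+V+K_{2})=(A+U)+(B+V)+K .
\]
Writing $X=A+U$ and $Y=B+V$, Lemma~\ref{lemma_22} identifies $X=n_{1}n_{3}p_{1}\mathbb{I}_{n_{2}p_{2}}$ and $Y=n_{2}n_{3}p_{2}\mathbb{I}_{n_{1}p_{1}}$, whose sum is $H=n_{3}\mathbb{I}_{n_{1}p_{1}n_{2}p_{2}}$, so that $S+R=H+K=\mathbb{Z}_{N}$ exactly as before.

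The crux — and the only place where the new construction differs from Theorem~\ref{thm_12} — is the verification of these two identities. The key observation is that $X$ and $Y$ are \emph{subgroups}. Indeed, Lemma~\ref{lemma_22} gives $X=n_{1}n_{3}p_{1}\mathbb{I}_{n_{2}p_{2}}$, which is exactly the set of the $n_{2}p_{2}$ multiples of $n_{1}n_{3}p_{1}$ in $\mathbb{Z}_{N}$, hence the cyclic subgroup $\langle n_{1}n_{3}p_{1}\rangle$; likewise $Y=\langle n_{2}n_{3}p_{2}\rangle$. Because $0\in U$ we have $A\subseteq A+U=X$, and because $0\in V$ we have $B\subseteq B+V=Y$. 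Each replacement element $a_{i}$ therefore lies in $X$ and each $b_{j}$ lies in $Y$, so $X+a_{i}=X$ and $Y+b_{j}=Y$ (a coset of a subgroup by one of its own elements is the subgroup itself). Consequently $X+(v_{i}+a_{i})=X+v_{i}$ and $Y+(u_{j}+b_{j})=Y+u_{j}$, which is precisely $X+V'=X+V$ and $Y+U'=Y+U$; since $X=A+U$ and $Y=B+V$, these are the two identities above.

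It remains to check directness and non-periodicity. The sum $A+B$ is direct because $n_{1}p_{1}$ is coprime with $n_{2}p_{2}$ (verbatim as in Theorem~\ref{thm_12}), so $|S|=n_{1}n_{2}$. Since a replacement can never increase cardinality, $|U'|\le|U|=p_{2}$ and $|V'|\le|V|=p_{1}$, whence
\[
|R|\le |U|\,|V'|\,|K_{1}|+|U'|\,|V|\,|K_{2}|\le p_{1}p_{2}\,|K|=n_{3}p_{1}p_{2},
\]
using $|K|=N/|H|=n_{3}$. On the other hand $S+R=\mathbb{Z}_{N}$ forces $|S|\,|R|\ge N$, hence $|R|\ge n_{3}p_{1}p_{2}$; therefore $|R|=n_{3}p_{1}p_{2}$ and $|S|\,|R|=N=|\mathbb{Z}_{N}|$. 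By the direct-sum criterion (the equivalence $(i)\Leftrightarrow(ii)$ stated earlier), the sum $S+R$ is direct. Non-periodicity of $S=A+B$ follows exactly as in Theorem~\ref{thm_12} from Theorem~\ref{thm_szabo}, the directness of $A+\langle B\rangle$ being unaffected by the modification; non-periodicity of $R$ is a hypothesis. Hence $(S,R)$ is a Vuza canon of $\mathbb{Z}_{N}$.

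I expect the main obstacle to be conceptual rather than computational: one must notice that perturbing $U$ and $V$ by elements of $B$ and $A$ is harmless precisely because those perturbations take place inside the subgroups $Y=B+V$ and $X=A+U$, and so disappear upon forming the relevant cosets. The secondary point needing care is that the replacements might in principle shrink $U'$ or $V'$; the cardinality inequality above sidesteps this, since $S+R=\mathbb{Z}_{N}$ together with the upper bound on $|R|$ forces the replacements to be size-preserving a posteriori.
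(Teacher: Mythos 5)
Your proof is correct and follows essentially the same route as the paper: the absorption identities $A+U+V'=A+U+V$ and $B+V+U'=B+V+U$ are exactly what the paper establishes by observing that each $b_{j}$ (resp.\ $a_{i}$) is a period of the subgroup $V+B=\langle B\rangle$ (resp.\ $U+A=\langle A\rangle$), which is the same fact as your "$b_{j}\in Y$ and $Y$ is a subgroup, so $Y+b_{j}=Y$"; the telescoping of $S+R$ onto $H+K=\mathbb{Z}_{N}$ and the non-periodicity of $S$ via Theorem~\ref{thm_szabo} are likewise identical. Your a posteriori cardinality argument pinning down $\left\vert R\right\vert =n_{3}p_{1}p_{2}$ is actually more careful than the paper's one-line appeal to $\left\vert H\right\vert \left\vert K\right\vert =N$, since it rules out the possibility that the replacements shrink $U'$ or $V'$ or that the two pieces of $R$ overlap.
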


\begin{proof}
Since $b_{j}\in B\backslash \{0\}$ is a period of the set $%
V+B=n_{2}p_{2}n_{3}\mathbb{I}_{n_{1}p_{1}}=\left\langle B\right\rangle :$%
\begin{eqnarray*}
V+U^{\prime }+B &=&V+\left[ \left( U\backslash \{u_{1},...,u_{n}\}\right)
\cup \{u_{1}+b_{1},...,u_{n}+b_{n}\}\right] +B \\
&=&V+\left( U+B)\backslash (\{u_{1},...,u_{n}\}+B\right) \cup
(\{u_{1}+b_{1},...,u_{n}+b_{n}\}+B) \\
&=&V+\left( U+B)\backslash (\{u_{1},...,u_{n}\}+B\right) \cup
(\{u_{1},...,u_{n}\}+B) \\
&=&V+U+B
\end{eqnarray*}%
and since $a_{i}\in A\backslash \{0\}$ is a period of $U+A=n_{1}p_{1}n_{3}%
\mathbb{I}_{n_{2}p_{2}}=\left\langle A\right\rangle :$%
\begin{eqnarray*}
U+V^{\prime }+A &=&U+\left[ \left( V\backslash \{v_{1},...,v_{m}\}\right)
\cup \{v_{1}+a_{1},...,v_{m}+a_{m}\}\right] +A \\
&=&U+\left( V+A)\backslash (\{v_{1},...,v_{m}\}+A\right) \cup
(\{v_{1}+a_{1},...,v_{m}+a_{m}\}+A) \\
&=&U+\left( V+A)\backslash \{v_{1},...,v_{m}\}+A\right) \cup
(\{v_{1},...,v_{m}\}+A) \\
&=&U+V+A
\end{eqnarray*}%
The sum $S+R$\ is then equal to%
\begin{eqnarray*}
S+R &=&\left( A+B\right) +\left( (U+V^{\prime }+K_{1})\cup (U^{\prime
}+V+K_{2})\right)  \\
&=&(A+B+U+V^{\prime }+K_{1})\cup (A+B+U^{\prime }+V+K_{2}) \\
&=&(A+B+U+V+K_{1})\cup (A+B+U+V+K_{2}) \\
&=&A+B+U+V+(K_{1}\cup K_{2}) \\
&=&A+B+U+V+K \\
&=&X+Y+K \\
&=&H+K=\mathbb{Z}_{N}
\end{eqnarray*}%
with $X=A+U$, $Y=B+V$ and $H=X+Y$. The sum is direct because the computation
of the cardinality leads to $\left\vert H\right\vert \left\vert K\right\vert
=N$.$\ $Furthermore, $S$ is non-periodic as in theorem \ref{thm_12} and if $R
$ is non-periodic, the pair $(S,R)$ is a Vuza canon.
\end{proof}

\begin{example}
Let $N=72$ with $n_{1}=p_{1}=n_{3}=2$, $n_{2}=p_{2}=3$. Compute the sets $%
A=\{0,8,16\}$, $B=\{0,18\}$, $U=\{0,24,48\}$ and $V=\{0,36\}$. The sets $%
U^{\prime }=\{0,24,48+18\}$ and $V^{\prime }=\{0,36+8\}$ are non-periodic.
The set $U+V^{\prime }=\{0,20,24,44,48,68\}$ has period 24 and 48.\ The set $%
U^{\prime }+V=\{0,24,30,36,60,66\}$ has period 36. Choosing $K_{2}=\{0\}$
and $K_{1}=\{1\}$ leads to a Vuza canon with%
\begin{equation*}
S=A\oplus B=\{0,8,16,18,26,34\}
\end{equation*}%
and%
\begin{eqnarray*}
R &=&(U+V^{\prime }+K_{1})\cup (U^{\prime }+V+K_{2}) \\
&=&\{0,1,21,24,25,30,36,45,49,60,66,69\}
\end{eqnarray*}
\end{example}

\begin{remark}
\label{rmk_01}Let $\mathcal{R}$ be the set of all possible sets $%
\{U+V^{\prime },U^{\prime }+V\},$ for all $U,U^{\prime },V$ and $V^{\prime }$%
. Suppose the sets $K_{1},K_{2},...,K_{n_{3}}$ exist, and let K be the union%
\begin{equation*}
K=\underset{j=1}{\overset{n_{3}}{\mathop{\displaystyle \bigcup }}}K_{j}
\end{equation*}%
If there exist $n_{3}$ different $R_{j}$ in $\mathcal{R}$, it is easy to
verify that the pair $(S,R)$ with $S$ as above and 
\begin{equation*}
R=\underset{j=1}{\overset{n_{3}}{\mathop{\displaystyle \bigcup }}}\left(
R_{j}+K_{j}\right)
\end{equation*}%
is a Vuza canon of $\mathbb{Z}_{N}$.
\end{remark}

Knowing a Vuza canon $(S,R)$, new canons can be generated by applying the
above construction to the $S$\ part. More precisely,

\begin{theorem}
\label{thm_S}Let $A,B,U,V,U^{\prime },V^{\prime },K_{1}$ and $K_{2}$ defined
as above and such that the pair $(S,R)$ with 
\begin{eqnarray*}
S &=&A+B \\
R &=&(U+V^{\prime }+K_{1})\cup (U^{\prime }+V+K_{2})
\end{eqnarray*}%
is a Vuza canon of $\mathbb{Z}_{N}$.\ Suppose that there is two non-periodic
sets $A^{\prime }$\ and $B^{\prime }$\ such that the set $A^{\prime }$ is
constructed by replacing non-zero elements $a_{j}\in A$ by their sum with
some non-zero elements of U 
\begin{equation*}
A^{\prime }=\left( A\backslash \{a_{1},...,a_{n}\}\right) \cup
\{u_{1}+a_{1},...,u_{n}+a_{n}\}
\end{equation*}%
and the set $B^{\prime }$ by replacing some non-zero elements $b_{i}\in B$
by their sum with some non-zero elements of V%
\begin{equation*}
B^{\prime }=\left( B\backslash \{b_{1},...,b_{m}\}\right) \cup
\{v_{1}+b_{1},...,v_{m}+b_{m}\}
\end{equation*}%
Then the pair $(S^{\prime },R)$ with $S^{\prime }=A^{\prime }+B^{\prime }$
is a Vuza canon of $\mathbb{Z}_{N}.$
\end{theorem}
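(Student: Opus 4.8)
The plan is to mirror the proof of Theorem \ref{thm_12} and its generalization, exploiting the fact that the modifications producing $A'$ and $B'$ become invisible once the full translate blocks $U$ and $V$ are reintroduced. The relevant structural objects are the subgroups $X=\langle A\rangle=A+U=n_1p_1n_3\mathbb{I}_{n_2p_2}$ and $Y=\langle B\rangle=B+V=n_2p_2n_3\mathbb{I}_{n_1p_1}$, together with $H=X+Y=n_3\mathbb{I}_{n_1n_2n_3p_1p_2}$. The key observation driving everything is that $U=n_1n_2n_3p_1\mathbb{I}_{p_2}$ and $V=n_1n_2n_3p_2\mathbb{I}_{p_1}$ are themselves cyclic subgroups of $\mathbb{Z}_N$, of orders $p_2$ and $p_1$, since their generators divide $N$ exactly. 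The construction replaces chosen nonzero $a_j\in A$ by $u_j+a_j$ with $u_j\in U$, and chosen nonzero $b_i\in B$ by $v_i+b_i$ with $v_i\in V$.

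First I would prove the two absorption identities $A'+U=A+U=X$ and $B'+V=B+V=Y$. For the first, since $U$ is a subgroup and $u_j\in U$ we have $u_j+U=U$, hence $(u_j+a_j)+U=a_j+(u_j+U)=a_j+U$; thus each modified element of $A'$ contributes exactly the same coset $a_j+U$ as the original $a_j$, and taking the union over all of $A'$ gives $A'+U=A+U$. The identity for $B'$ is identical, using that $V$ is a subgroup and $v_i\in V$. A byproduct of the same counting (one has $|A'|\le|A|$ by construction, while $A'+U=X$ forces $|A'|\,|U|\ge|X|=|A|\,|U|$) is that $|A'|=|A|$, $|B'|=|B|$, and the sums $A'\oplus U$, $B'\oplus V$ are direct.

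Next I would compute $S'+R$ piecewise. Writing $S'+R=(A'+B'+U+V'+K_1)\cup(A'+B'+U'+V+K_2)$, the goal is to show each inner sum equals the subgroup $H$. For the first I would chain
\[ A'+B'+U+V'=(A'+U)+V'+B'=X+V'+B'=X+V+B'=X+(B'+V)=X+Y=H, \]
using the absorption $A'+U=X$, then the transfer relation $X+V'=X+V$ (established in the proofs of Theorem \ref{thm_12} and its successors, since $V'$ differs from $V$ only by elements of $A\subseteq X$), then the absorption $B'+V=Y$. Symmetrically, $A'+B'+U'+V=(B'+V)+U'+A'=Y+U'+A'=Y+U+A'=Y+(A'+U)=Y+X=H$, using $Y+U'=Y+U$. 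Hence $S'+R=(H+K_1)\cup(H+K_2)=H+K=\mathbb{Z}_N$.

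Finally I would verify the two defining properties. Since $A'\subseteq X$, $B'\subseteq Y$ and $X\oplus Y=H$ is direct, the sum $A'+B'$ is direct with $|S'|=|A'|\,|B'|=|A|\,|B|=|S|$, so $|S'|\,|R|=|S|\,|R|=N$; the criterion that a sumset filling $\mathbb{Z}_N$ with the correct cardinality is automatically direct then gives $S'\oplus R=\mathbb{Z}_N$. For non-periodicity of $S'$, the sets $A'$ and $B'$ are non-periodic by hypothesis and normalized (only nonzero elements are replaced, so $0\in A'$, $0\in B'$), and $A'+\langle B'\rangle$ is direct because $A'\subseteq X$, $\langle B'\rangle\subseteq Y$ and $X\cap Y=\{0\}$, so Theorem \ref{thm_szabo} yields that $S'=A'+B'$ is non-periodic; as $R$ is the outer voice of the given Vuza canon it is already non-periodic, and $(S',R)$ is a Vuza canon. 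I expect the main obstacle to be bookkeeping rather than a single hard idea: one must ensure the replacements do not collapse cardinality (no new element coincides with a survivor), which is exactly what secures $|S'|=|S|$ and the directness of $A'\oplus U$, $B'\oplus V$, and one must cite the transfer identities $X+V'=X+V$, $Y+U'=Y+U$ in the generality in which $U',V'$ are taken here. The conceptual core, however, is the uniform observation that each block $U$, $V$, $X$, $Y$ is a subgroup absorbing its own elements, which renders every modification invisible after the sum is completed.
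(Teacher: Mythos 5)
Your proof is correct and takes essentially the same route as the paper, whose entire argument is the one-line observation that $A'+U=A+U$ and $B'+V=B+V$; you simply supply the details the paper leaves implicit (the subgroup absorption $u_j+U=U$, the transfer identities $X+V'=X+V$ and $Y+U'=Y+U$, the cardinality count, and non-periodicity of $S'$ via Theorem~\ref{thm_szabo}). One typo: the subgroup $H$ is $n_3\mathbb{I}_{n_1p_1n_2p_2}$, not $n_3\mathbb{I}_{n_1n_2n_3p_1p_2}$.
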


\begin{proof}
The proof is straightforward since $A^{\prime }+U=A+U,$ and $B^{\prime
}+V=B+V.$ Notice that elements $u_{j}$ (resp.\ $v_{j}$) are not necessary
different.
\end{proof}

\begin{theorem}
\label{thm_L}Let $A,B,U,V,U^{\prime },V^{\prime },K_{1}$ and $K_{2}$ defined
as above and such that the pair $(S,R)$ with 
\begin{eqnarray*}
S &=&A+B \\
R &=&(U+V^{\prime }+K_{1})\cup (U^{\prime }+V+K_{2})
\end{eqnarray*}%
is a Vuza canon of $\mathbb{Z}_{N}$.\ Suppose that L and M are proper
subsets of $K=\mathbb{Z}_{n_{3}}$ such that $\mathbb{Z}_{n_{3}}=L\oplus M$
is a direct sum and $M=M_{1}\cup M_{2}$. The pair $(S^{\prime },R^{\prime })$
with 
\begin{eqnarray*}
S^{\prime } &=&A+B+L \\
R^{\prime } &=&(U+V^{\prime }+M_{1})\cup (U^{\prime }+V+M_{2})
\end{eqnarray*}%
is a rhythmic canon of $\mathbb{Z}_{N}$.
\end{theorem}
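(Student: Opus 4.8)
The plan is to prove only that $S'+R'=\mathbb{Z}_N$ is a \emph{direct} sum: since the statement claims a \emph{rhythmic} canon (not a Vuza canon), the non-periodicity of $S'$ and $R'$ need not be addressed and Theorem \ref{thm_szabo} is not invoked. I would first record the collapsing identities that already make $(S,R)$ a canon of the prescribed form. Writing $X=A+U$, $Y=B+V$ and $H=X+Y=n_{3}\mathbb{I}_{n_{1}p_{1}n_{2}p_{2}}$, the construction behind Theorem \ref{thm_12} guarantees
$$A+B+U+V'=A+B+U'+V=A+B+U+V=H,$$
which is precisely the content of the equalities $X+V'=X+V$ and $Y+U'=Y+U$ verified there. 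I would also recall that $\mathbb{Z}_N=H\oplus K$, where $K=\mathbb{Z}_{n_{3}}$ is the chosen transversal of the cosets of $H$.

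Next I would expand the Minkowski sum by distributing over the union,
$$S'+R'=(A+B+L+U+V'+M_{1})\cup(A+B+L+U'+V+M_{2}),$$
and apply the collapsing identities so that each bracket reduces to $H+L+M_{i}$. Using $(C+D_{1})\cup(C+D_{2})=C+(D_{1}\cup D_{2})$ with $C=H+L$, this gives
$$S'+R'=(H+L+M_{1})\cup(H+L+M_{2})=(H+L)+(M_{1}\cup M_{2})=H+(L+M).$$
Since $\mathbb{Z}_{n_{3}}=L\oplus M$, we have $L+M=\mathbb{Z}_{n_{3}}=K$ as subsets, hence $S'+R'=H+K=\mathbb{Z}_N$.

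Finally I would upgrade this set equality to a direct sum via the cardinality criterion $(i)\Leftrightarrow(ii)$ stated earlier. Counting yields $|S'|\le|A||B||L|=n_{1}n_{2}|L|$ and, using that $M=M_{1}\cup M_{2}$ is a disjoint union, $|R'|\le p_{1}p_{2}(|M_{1}|+|M_{2}|)=p_{1}p_{2}|M|$; together with $|L||M|=n_{3}$ (directness of $L\oplus M$) this gives $|S'||R'|\le n_{1}n_{2}p_{1}p_{2}n_{3}=N$. Combined with $N=|\mathbb{Z}_N|=|S'+R'|\le|S'||R'|$, every inequality is forced to be an equality, so $|S'||R'|=N$ and the sum $S'+R'$ is direct; hence $(S',R')$ is a rhythmic canon of $\mathbb{Z}_N$.

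The one point needing care — and where the hypotheses genuinely enter — is the reduction $S'+R'=H+(L+M)$: it relies on the collapsing identities holding verbatim for the primed sets $U',V'$ appearing in $R'$, which is exactly the assumption that $(S,R)$ is a canon of this form, and on $M=M_{1}\cup M_{2}$ being a genuine (disjoint) partition so that the bound on $|R'|$ is tight. Were $M_{1}$ and $M_{2}$ allowed to overlap, the identity $S'+R'=\mathbb{Z}_N$ would still hold set-theoretically but the sum would fail to be direct, so the disjointness of the decomposition of $M$ is essential to the conclusion.
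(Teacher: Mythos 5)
Your proof is correct and follows essentially the same route as the paper's: expand $S'+R'$ over the union, collapse each bracket via $A+B+U+V'=A+B+U'+V=A+B+U+V=H$, obtain $H+(L+M)=H+K=\mathbb{Z}_N$, and conclude directness by a cardinality count. You are somewhat more explicit than the paper on that last counting step (and right to flag that the disjointness of $M_1$ and $M_2$, implicit in the paper's setup, is what makes the count tight), but the argument is the same.
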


\begin{proof}
\begin{eqnarray*}
S^{\prime }+R^{\prime } &=&(A+B+L+U+V^{\prime }+M_{1})\cup (A+B+L+U^{\prime
}+V+M_{2}) \\
&=&A+B+L+U+V+(M_{1}\cup M_{2}) \\
&=&H+K=\mathbb{Z}_{N}
\end{eqnarray*}%
with $X=A+U$, $Y=B+V$ and $H=X+Y$. The sum is direct because the computation
of the cardinality leads to $\left\vert H\right\vert \left\vert K\right\vert
=N$.$\ $
\end{proof}

\section{Computing Vuza Canons}

From the musical point of view, canons could be translated without changing
their identity. If $(S,R)$ is a canon of $\mathbb{Z}_{N}$, any translate of $%
S$ tiles with the same $R$. That is why canons are identified by their \emph{%
prime form}. If $S$ is a set of $\mathbb{Z}_{N}$, the \emph{basic form} of $%
S $ is the smallest circular permutation for lexicographic order $\Delta S$\
of the set of consecutive intervals in $S:$ 
\begin{equation*}
(s_{2}-s_{1},s_{3}-s_{2},...,s_{n}-s_{n-1},s_{1}-s_{n})
\end{equation*}%
where $0\leq s_{1}<s_{2}<\cdots <s_{n}<N$ are the elements of $S$.\ For
example in $\mathbb{Z}_{72}$, the set $S=\{0,2,10,18,56,64\}$ has a prime
form equals to $\{0,8,16,18,26,34\}$ and a basic form equals to $%
(8,8,2,8,8,38)$.\ The results of the previous section provide us a way for
computing Vuza canons for a given Vuza order $N$.

(1) Determine a proper subset $H$ of $\mathbb{Z}_{N}$\ such that $H$\ is a
direct sum of two subsets $X$ and $Y$.\ For a given $%
N=n_{1}n_{2}n_{3}p_{1}p_{2},$ where $p_{1},p_{2}$\ are different primes, $%
p_{i}n_{i}\geq 2,$ for $i=1,2$ and $\mathrm{gcd}(n_{1}p_{1},n_{2}p_{2})=1$,
there always exists at least one such subset%
\begin{equation*}
H=n_{3}\mathbb{I}_{n_{1}p_{1}n_{2}p_{2}},\qquad X=n_{1}n_{3}p_{1}\mathbb{I}%
_{n_{2}p_{2}},\qquad Y=n_{2}n_{3}p_{2}\mathbb{I}_{n_{1}p_{1}}
\end{equation*}%
but it could exist more than one.\ For example, for $N=144$, we have two
solutions: $H=2\mathbb{I}_{72}=16\mathbb{I}_{9}+18\mathbb{I}_{8}$ and $H=4%
\mathbb{I}_{36}=36\mathbb{I}_{4}+16\mathbb{I}_{9}.$ Next, decompose $H$ as a
direct sum of $X$ and $Y$, and each set as $X=A+U$ and $Y=B+V$.

(2) Compute the list of all possible sets $A$ by theorem \ref{thm_S}, the
list of all possible sets $B$ and deduce all prime form of $S=A+B$.

(3) Compute the set of coset representatives such that $\mathbb{Z}%
_{N}=H\oplus K$.\ If $K$ can be decompose as a sum of two proper subsets $%
K=L+M$ as in theorem \ref{thm_L}, compute the new orbit of Vuza canons $%
S=A+B+L$ and if $S$ is non-periodic, memorize all prime forms.

(4) For a prime form $S$ of each orbit, obtained as above or by switching $%
n_{1}$ and $n_{3}$, compute the prime forms of $R$, using the results of the
previous section and remark \ref{rmk_01}.

The computation of the number of Vuza canons up to circular permutations
leads to the following tables.\ The length of $S$ is $\left\vert
S\right\vert =n_{1}n_{2}$ and the length of $R$ is $\left\vert R\right\vert
=n_{3}p_{1}p_{2}.$ The number of Vuza canons found is the product $\#S.\#R$
of the number $\#S$\ of prime forms $S$ times the number $\#R$\ of prime
forms $R$. The results of Fripertinger \citep{Fri2001}, Amiot \citep{Ami2011}
and Kolountzakis and Matolcsi \citep{Kol2009} depicted in the first table
are well-known.

\begin{table}[tbp]
\centering
\begin{tabular}{c|ccccc|c|c|c|c}
\hline
$N$ & $n_{1}$ & $p_{1}$ & $n_{2}$ & $p_{2}$ & $n_{3}$ & $\left\vert
S\right\vert $ & $\left\vert R\right\vert $ & $\#S$ & $\#R$ \\ \hline
$72$ & $2$ & $2$ & $3$ & $3$ & $2$ & $6$ & $12$ & $3$ & $6$ \\ 
$108$ & $2$ & $2$ & $3$ & $3$ & $3$ & $6$ & $18$ & $3$ & $252$ \\ 
$120$ & $2$ & $2$ & $3$ & $5$ & $2$ & $6$ & $20$ & $8$ & $18$ \\ 
$-$ & $2$ & $2$ & $5$ & $3$ & $2$ & $10$ & $12$ & $16$ & $20$ \\ 
$144$ & $2$ & $2$ & $3$ & $3$ & $4$ & $6$ & $24$ & $3$ & $8640$ \\ 
$-$ & $2$ & $2$ & $3$ & $3$ & $4$ & $6$ & $24$ & $6$ & $36$ \\ 
$-$ & $4$ & $2$ & $3$ & $3$ & $2$ & $12$ & $12$ & $6$ & $60$ \\ 
$-$ & $4$ & $2$ & $3$ & $3$ & $2$ & $12$ & $12$ & $162$ & $12$ \\ 
$-$ & $4$ & $2$ & $3$ & $3$ & $2$ & $12$ & $12$ & $324$ & $6$ \\ 
$168$ & $2$ & $2$ & $3$ & $7$ & $2$ & $6$ & $28$ & $16$ & $54$ \\ 
$-$ & $2$ & $2$ & $7$ & $3$ & $2$ & $14$ & $12$ & $104$ & $42$ \\ \hline
\end{tabular}%
\par
\bigskip
\caption{Classification of Vuza Canons (up to $N=168$)}
\end{table}

The second table shows new results obtained by application of the above
resullts. Due to too long computing times, some cases are missing (e.g.\ $%
N=216$ with $n_{1}=p_{1}=2$, $n_{2}=p_{2}=3$ and $n_{3}=6$).

\begin{table}[tbp]
\centering
\begin{tabular}{c|ccccc|c|c|c|c}
\hline
$N$ & $n_{1}$ & $p_{1}$ & $n_{2}$ & $p_{2}$ & $n_{3}$ & $\left\vert
S\right\vert $ & $\left\vert R\right\vert $ & $\#S$ & $\#R$ \\ \hline
$180$ & $2$ & $2$ & $3$ & $3$ & $5$ & $6$ & $30$ & $3$ & $77760$ \\ 
$-$ & $2$ & $2$ & $3$ & $5$ & $3$ & $6$ & $30$ & $8$ & $2052$ \\ 
$-$ & $2$ & $5$ & $3$ & $3$ & $2$ & $6$ & $30$ & $3$ & $84$ \\ 
$-$ & $2$ & $2$ & $5$ & $3$ & $3$ & $10$ & $18$ & $16$ & $1800$ \\ 
$-$ & $3$ & $3$ & $5$ & $2$ & $2$ & $15$ & $12$ & $9$ & $105$ \\ 
$200$ & $2$ & $2$ & $5$ & $5$ & $2$ & $10$ & $20$ & $125$ & $60$ \\ 
$216$ & $2$ & $2$ & $9$ & $3$ & $2$ & $18$ & $12$ & $575$ & $72$ \\ 
$-$ & $4$ & $2$ & $3$ & $3$ & $2$ & $12$ & $18$ & $6$ & $13680$ \\ 
$240$ & $4$ & $2$ & $5$ & $3$ & $2$ & $20$ & $12$ & $32$ & $200$ \\ 
$-$ & $4$ & $2$ & $3$ & $5$ & $2$ & $12$ & $20$ & $4100$ & $16$ \\ 
$252$ & $2$ & $2$ & $3$ & $7$ & $3$ & $6$ & $42$ & $16$ & $396$ \\ 
$-$ & $2$ & $7$ & $3$ & $3$ & $2$ & $6$ & $42$ & $9$ & $366$ \\ 
$264$ & $2$ & $2$ & $3$ & $11$ & $2$ & $6$ & $44$ & $40$ & $558$ \\ 
$270$ & $5$ & $2$ & $3$ & $3$ & $3$ & $15$ & $18$ & $9$ & $50400$ \\ 
$280$ & $2$ & $2$ & $5$ & $7$ & $2$ & $10$ & $28$ & $425$ & $180$ \\ 
$-$ & $2$ & $2$ & $7$ & $5$ & $2$ & $14$ & $20$ & $2232$ & $126$ \\ 
$300$ & $2$ & $3$ & $5$ & $5$ & $2$ & $10$ & $30$ & $104$ & $240$ \\ 
$-$ & $3$ & $2$ & $5$ & $5$ & $2$ & $15$ & $20$ & $104$ & $480$ \\ 
$324$ & $2$ & $2$ & $9$ & $3$ & $3$ & $18$ & $18$ & $729$ & $16848$ \\ 
$336$ & $4$ & $2$ & $3$ & $7$ & $2$ & $12$ & $28$ & $32$ & $7020$ \\ 
$-$ & $4$ & $2$ & $7$ & $3$ & $2$ & $28$ & $12$ & $208$ & $420$ \\ 
$392$ & $2$ & $2$ & $7$ & $7$ & $2$ & $14$ & $28$ & $16807$ & $378$ \\ 
$400$ & $4$ & $2$ & $5$ & $5$ & $2$ & $20$ & $20$ & $250$ & $2040$ \\ 
$450$ & $3$ & $3$ & $5$ & $5$ & $2$ & $15$ & $30$ & $375$ & $1920$ \\ \hline
\end{tabular}%
\par
\bigskip
\caption{Lower bounds for the number of some Vuza canons}
\end{table}

\section{Conclusion}

The aim of this paper was to show how to construct some Vuza canons and to
compute a minoration of their numbers for some value of $N$. We established
some theorems helping us to elaborate a new algorithm.\ But in many cases,
the number of solutions is quite large, and some shortcuts have to be found.

\section*{Acknowledgements}

The author would like to thank the anonymous reviewers for their valuable
comments and suggestions to improve the quality of the paper.

\bigskip



\begin{thebibliography}{Kolountzakis and Matolcsi (2009)}
\bibitem[Amiot (1995)]{Ami2005} Amiot, E. 2005. ``Rhythmic canons and Galois
theory'' \emph{Colloquium on Mathematical Music Theory, Grazer Math. Ber.},
347: 1-21.

\bibitem[Amiot (2011)]{Ami2011} Amiot E. 2011. ``Structures, Algorithms, and
Algebraic Tools for Rhythmic Canons'' \emph{Perspectives of New Music}, 49\
(2): 93-142.

\bibitem[Brujin (1953a)]{Bru1953} Brujin N.G. 1953. ``On the factorization
of finite abelian groups'', \emph{Indag. Math}. 15: 258--264.

\bibitem[Brujin (1953b)]{Bru1953b} Brujin N.G. ``On the factorization of
cyclic groups'', \emph{Indag.\ Math.\ } 15: 370-377.

\bibitem[Fripertinger (2001)]{Fri2001} Fripertinger H. 2001. ``Enumeration
of nonisomorphic canons'', \emph{Tatra Mt. Mathematical Publications}, 23:
47-57.

\bibitem[Haj\'{o}s (1950a)]{Haj1950} Haj\'{o}s H. 1950.``Sur la
factorization des groupes ab\'{e}liens''. \emph{Casopis Pest Mat. Fys}., 74:
157--162.

\bibitem[Haj\'{o}s (1950b)]{Haj1950b} Haj\'{o}s H. 1950.``Sur le probl\`{e}%
me de factorization des groupes cycliques.'' \emph{Acta Math.\ Acad.\ Sci.\
Hungar.}\ 1: 189-195.

\bibitem[Jedrzejewski (2006)]{Jed2006} Jedrzejewski F. 2006.\emph{%
Mathematical Theory of Music}, Sampzon: Ircam/Delatour.

\bibitem[Jedrzejewski (2009)]{Jed2009} Jedrzejewski F. 2009. ``Tiling the
integers with aperiodic tiles'', \emph{Journal of Mathematics and Music} 3
(2): 99-115.

\bibitem[Kolountzakis and Matolcsi (2009)]{Kol2009} Kolountzakis M. and
Matolcsi M. 2009. ``Algorithms for Translational Tiling''. \emph{Journal of
Mathematics and Music} 3: 2-85.

\bibitem[Red\'{e}i (1950)]{Red1950} Red\'{e}i L. 1950. Ein Beitrag zum
Problem der Faktorisation von endlichen Abselschen Gruppen. \emph{Acta
Math.\ Acad.\ Sci.\ Hungar.} 1: 197-207.

\bibitem[Sands (1957)]{San1957} Sands A. D. 1957. ``On the factorisation of
finite abelian groups. I.'' \emph{Acta Mathematica Hungarica}, 8: 65--86.

\bibitem[Sands (1962)]{San1962} Sands A. D. 1962. ``On the factorisation of
finite abelian groups. II''. \emph{Acta Mathematica Hungarica}, 13: 153--169.

\bibitem[Sands (2004)]{San2004} Sands A. D. 2004. ``Factoring finite abelian
groups.'' \emph{Journal of Algebra} 275: 540-549.

\bibitem[Szabo and Sands (2009)]{Sza2009} Szabo\ S. and Sands A.D. 2009. 
\emph{Factoring Groups into Subsets}, Boca Raton: CRC\ Press, 2009.

\bibitem[Vuza (1991)]{Vuz1991} Vuza, D. T. 1991 ``Supplementary Sets and
Regular Complementary Unending Canons.'' \emph{\ Perspectives of New Music},
29 (2): 22-49; 30 (1): 184-207; 30 (2): 102-125; 31(1): 270-305.
\end{thebibliography}
\end{document}